\documentclass{amsart}
\usepackage{graphicx}
\usepackage{rotating}
\graphicspath{{graphics/}}
\usepackage{amscd,amssymb,url}
\usepackage[frame,curve,rotate,arrow,matrix,poly]{xy}

\newtheorem{thm}{Theorem}[section]
\newtheorem{metathm}[thm]{Metatheorem}
\newtheorem{lem}[thm]{Lemma}

\theoremstyle{definition}
\newtheorem{df}[thm]{Definition}
\newtheorem{ex}[thm]{Example}

\theoremstyle{remark}

\newtheorem*{rem}{Remark}
\newtheorem{ack}{Acknowledgments}

\def\CC{\mathcal C}
\def\DD{\mathcal D}
\def\F{\mathcal F}
\def\GG{\mathcal G}
\def\G{\mathfrak G}
\def\gtg{\mathfrak g}
\def\Z{\mathbb Z}
\def\R{\mathbb R}
\def\TT{\mathcal T}
\def\U{\mathcal U}

\DeclareMathOperator{\Ad}{Ad}
\DeclareMathOperator{\AKSZ}{AKSZ}
\DeclareMathOperator{\cl}{cl}
\DeclareMathOperator{\CS}{CS}
\DeclareMathOperator{\D}{D}
\DeclareMathOperator{\DR}{DR}
\DeclareMathOperator{\id}{id}
\DeclareMathOperator{\op}{op}
\DeclareMathOperator{\bbid}{\mathbf{id}}
\DeclareMathOperator{\Cob}{\mathbf{Cob}_n}
\DeclareMathOperator{\GW}{GW}
\DeclareMathOperator{\Hom}{Hom}
\DeclareMathOperator{\mor}{Mor}
\DeclareMathOperator{\Ob}{Ob}
\DeclareMathOperator{\Map}{Map}
\DeclareMathOperator{\Path}{Path}
\DeclareMathOperator{\RW}{RW}
\DeclareMathOperator{\Tr}{Tr}
\DeclareMathOperator{\YM}{YM}

\newcommand{\abs}[1]{{\lvert#1\rvert}}
\newcommand{\Cat}{\mathbf{Cat}}
\newcommand{\del}{\partial}
\newcommand{\nCat}{\text{$n$-$\mathbf{Cat}$}}
\newcommand{\Mor}{\mathbf{Mor}}
\newcommand{\Set}{\mathbf{Set}}
\newcommand{\Th}[1]{\mathbf{Th}(#1)}
\newcommand{\eps}{\varepsilon}

\begin{document}

\title[A Higher Category of Cobordisms and TQFT]{A Higher Category of
  Cobordisms and Topological Quantum Field Theory}
\author[M.~Feshbach]{Mark Feshbach}
\author[A.~A. Voronov]{Alexander A. Voronov}
\address {School of Mathematics\\University of Minnesota\\
  Minneapolis, MN 55455\\USA\\ and IPMU\\University of Tokyo\\Kashiwa,
  277-8583\\Japan}
\email{voronov@umn.edu}

\thanks{Partially supported by NSF grant DMS-0805785, JSPS grant
  L-10514, and World Premier International Research Center Initiative,
  MEXT, Japan}

\subjclass[2010]{18D05, 18D10, 57R56, 81T45}
\date{August 25, 2011}

\begin{abstract}
  The goal of this work is to describe a categorical formalism for
  (Extended) Topological Quantum Field Theories (TQFTs) and present
  them as functors from a suitable category of cobordisms with corners
  to a linear category, generalizing 2d open-closed TQFTs to higher
  dimensions. The approach is based on the notion of an $n$-fold
  category by C.~Ehresmann, weakened in the spirit of monoidal
  categories (associators, interchangers, Mac Lane's pentagons and
  hexagons), in contrast with the simplicial (weak Kan and complete
  Segal) approach of Jacob Lurie. We show how different Topological
  Quantum Field Theories, such as gauge, Chern-Simons, Yang-Mills,
  WZW, Seiberg-Witten, Rozansky-Witten, and AKSZ theories, as well as
  sigma model, may be described as functors from the pseudo $n$-fold
  category of cobordisms to a pseudo $n$-fold category of sets.
\end{abstract}

\maketitle


\section*{Introduction}

In recent years, there has been an increased interest to higher
categories or $n$-categories
\cite{leinster,cheng-lauda,lurie:topoi}. One of the motivating ideas
for that development was Grothendieck's idea of the fundamental
$n$-groupoid of a topological space. Since any kind of associativity
in the fundamental $n$-groupoid is expected to be satisfied only up to
homotopy, similar to composition of based loops in a space, so-called
``weak'' $n$-categories have been of primary interest in the
higher-category community.  In this paper, we focus on a close
relative of the fundamental $n$-groupoid, the category of cobordisms
with corners. We introduce the notion of a pseudo $n$-fold category,
which is a weak version of a classical strict notion by C.~Ehresmann
\cite{ehresmann:1963,ehresmann2}, and show that cobordisms with
corners naturally possess the structure of a pseudo $n$-fold category,
see Theorem~\ref{cob-thm}. This weak version is similar to the
familiar weakness in monoidal coategories: associators and
interchangers are part of the structure, and coherence axioms include
pentagons and hexagons. One of the main results of the paper is a
Regular Coherence theorem, Theorem~\ref{coherence} of the Appendinx,
indicates that our set of coherence axioms is complete, that is to
say, no matter what coherence maps one chooses to go from one way of
composing morphisms to another, the resulting coherence maps will be
equal. This theorem is analogous to Mac Lane's coherence theorems for
monoidal and symmetric monoidal categories and Joyal-Street's
coherence theorem for braided monoidal categories and possibly a
generalization theoreof.

Another goal of our paper is to present (extended) Topological Quantum
Field Theories (TQFTs), including gauge, Chern-Simons, Yang-Mills, and
Seiberg-Witten theories, and sigma model, as (contravariant, lax)
monoidal functors from the monoidal pseudo $n$-fold category of
cobordisms with corners to a certain monoidal $n$-fold category of
spans of sets, which are set-theoretic counterparts of cobordisms with
corners. The problem of presenting TQFTs as such functors was set up
by Baez and Dolan \cite{baez-dolan}, who formulated it as part of the
extended TQFT hypothesis, which also anticipates that extended TQFTs
are classified by their values on the one-point space. J.~Lurie
\cite{lurie:tft} described all the ingredients of such formalization
and proved the extended TQFT hypothesis. S.~Morrison and K.~Walker
\cite{morrison-walker} gave a very interesting definition of a weak
$n$-category and based their (derived) TQFTs on $n$-cobordisms in
which all inputs, boundaries, creases, and corners were essentially
morphed into one boundary. On a more physical side, A.~Kapustin
\cite{kapustin} indicated what one should expect from higher
categories to guarantee that actual models of quantum field theory are
indeed described as extended TQFT functors. Our main result here is a
demonstration that numerous physical models, from gauge theory to
sigma model, may indeed be described as extended TQFTs in our
formalism, see Metatheorem~\ref{meta}.

We have chosen an approach to higher categories with concrete
composition laws and coherences given by concrete associators and
interchangers. Another approach to higher categories, invented by
Boardman and Vogt \cite{boardman-vogt:structures} and developed in
subsequent works of Joyal \cite{joyal:quasi2002} and Lurie
\cite{lurie:topoi}, uses ``fuzzy'' compositions and coherences and has
proven to be very successful. We could have followed that path as well
and worked with the notion of an $n$-fold quasi-category, which we
introduce in Section~\ref{lurie}, but decided to develop an approach
hinted on and attempted in the works of Baez and Dolan
\cite{baez-dolan}, Verity, Morton, and Grandis, to name a few. This
approach has all the advantages of a hands-on method: since there
exist canonical, familiar compositions in such examples as cospans of
sets or cobordisms, it is tempting to have a theory based on them. We
have found it amazing that this theory has not yet been developed to a
fully coherent theory, and this is one of the goals of the present
paper.

In this paper, we deliberately avoid the question of Baez-Dolan's
cobordism (or tangle) hypothesis \cite{baez-dolan}, so as to reduce
categorical considerations to a minimum and concentrate on the
physical examples of Section~\ref{TQFT}. It would be interesting to
introduce duals \`a la Baez and Dolan in our context and verify the
cobordism hypothesis.

If the reader intends to skip the present paragraph, it may appear to
him that we must run into all sorts of set-theoretic difficulties in
the paper. Those may be resolved by using standard Grothendieck's
trick: assume the existence of two universes, one being an element of
the other: $\U \in \U'$. Then by default, when we talk about sets,
spaces, etc., we will assume those to be $\U$-small. Then the
categories of such will automatically be $\U'$-small. These will be
our default assumptions throughout the paper.

\begin{ack}
  We are grateful to John Baez, Michael Batanin, Ezra Getzler, Marco
  Grandis, Mark Hovey, Anton Kapustin, Tyler Lawson, Andrei Losev,
  Peter May, Bill Messing, Jae-Suk Park, Michael Shulman, and Jim
  Stasheff for stimulating discussions. While this paper was being
  written, the first author, Mark Feshbach, passed away suddenly and
  prematurely. The second author would like to acknowledge that
  without Mark, who brought in his depth and mathematical culture to
  the project and supported it with persistent interest, this project
  would have never been realized in present form, if realized at
  all. The second author also thanks IPMU for hospitality during the
  final stage of writing the paper.
\end{ack}

\section{Strict $n$-Fold Categories}

In this section we give a brief account of (strict) $n$-fold
categories of C.~Ehresmann \cite{ehresmann:1963,ehresmann2}, including
three equivalent definitions of an $n$-fold category. Details may be
found in \cite{fiore-paoli} and \cite{pfeiffer}.

There is an algebraic gadget which describes the usual notion of a
category. This gadget is called the \emph{theory of categories},
denoted $\Th{\Cat}$. It is an \emph{essentially algebraic theory},
also known as a \emph{finitely complete category}, which is just a
category with finite limits, or equivalently, pullbacks and a terminal
object. The statement that $\Th{\Cat}$ describes the notion of a
category means that a ($\U'$-small, see the last paragraph of the
Introduction) category may be defined as an \emph{algebra} over
$\Th{\Cat}$, which means a \emph{left exact}, i.e., finite-limit
preserving, functor $\Th{\Cat} \to \Set$, where $\Set$ is the category
of ($\U'$-small) sets. The theory $\Th{\Cat}$ of categories is
generated (as a finitely complete category) by two objects $\Ob$ and
$\mor$ and four morphisms $s, t: \mor \to \Ob$, $\id: \Ob \to \mor$,
and $\circ: \mor {}_t \times_s \mor \to \mor$, with a number of
relations imposed, including $\circ (\circ \times \id) = \circ (\id
\times \circ)$, which encodes the associativity of morphism
composition in a category, relations encoding what the source and the
target of identity morphisms and compositions of morphisms should be,
and those encoding the behavior of identity morphisms as units under
composition, see details in \cite{pfeiffer}.

\begin{df}
  An $n$-\emph{fold category} is a \emph{left exact} (i.e., limit
  preserving) functor
\[
F: \Th{\Cat}^n \to \Set.
\]
\end{df}
In particular, a $0$-fold category is a set and a $1$-fold category is
just a category. $2$-Fold categories are also known as \emph{double
  categories}.

\begin{rem}
  Note that by definition, the (essentially algebraic) theory of
  $n$-fold categories is nothing but the $n$th Cartesian power of the
  theory $\Th{\Cat}$ of categories.
\end{rem}

The following, equivalent definition of an $n$-fold category is
inductive. Set $\Cat_0 := \Set$ and assume we have defined the
category $\Cat_{n-1}$ of $(n-1)$-fold categories and proven it is
finitely complete.
\begin{df}
\label{inductive}
An $n$-\emph{fold category} is a left exact functor
\[
F: \Th{\Cat} \to \Cat_{n-1}.
\]
Equivalently, an $n$-fold category is an \emph{internal category} in
$\Cat_{n-1}$.
\end{df}

Another equivalent definition of an $n$-fold category may be given by
specifying sets of 0-, 1-, \dots, and $n$-morphisms and maps defining
relationships between these sets.
\begin{df}
\label{naive}
An $n$-\emph{fold category} consists of the following data satisfying
the following axioms.
\begin{description}
\item[Morphisms]
\begin{itemize}
\item A set $X$ of 0-morphisms (also called the \emph{objects});
\item For each $i$, $1 \le i \le n$, a set $X_i$ of 1-morphisms in the
  direction labeled $i$;

\dots

\item For each $k$-combination\footnote{Here a \emph{combination}
    means an (unordered) subset.} $\{i_1, \dots, i_k\}$ of numbers
  between 1 and $n$, a set $X_{i_1 \dots i_k}$ of $k$-morphisms in the
  directions $i_1, \dots, i_k$, i.e., in the direction of the
  ``$x_{i_1} \dots x_{i_k}$ plane;''

\dots

\item A set $X_{1 \dots n}$ of $n$-morphisms (in all existing
  directions, labeled $1, \dots, n$).
\end{itemize}
Let $kX$ denote the set of all $k$-morphisms.

\item[Sources, targets, identities] For each $k$, $1 \le k \le n$,
  (multi)maps $\mathbf{s}: kX \to (k-1)X$ for sources, $\mathbf{t}: kX
  \to (k-1)X$ for targets, and $\bbid: (k-1)X \to kX$ for
  identities. Here mutlimaps $\mathbf{s}$, $\mathbf{t}$, and $\bbid$
  denote collections of maps $s_{i_k}, t_{i_k}: X_{i_1 \dots i_k} \to
  X_{i_1 \dots i_{k-1}}$, and $\id_{i_k}: X_{i_1 \dots i_{k-1}} \to
  X_{i_1 \dots i_k}$ for each combination $\{i_1, \dots, i_k\}$ of $k$
  numbers between 1 and $n$.

\item[Compositions] For each combination $\{i_1, \dots, i_k\}$ of $k$
  numbers between 1 and $n$, maps
\[
\circ_{i_k}: X_{i_1 \dots i_k} \times_{X_{i_1 \dots i_{k-1}}} X_{i_1
  \dots i_k} \to X_{i_1 \dots i_k}
\]
of composition in direction $i_k$. Here $X_{i_1 \dots i_k}
\times_{X_{i_1 \dots i_{k-1}}} X_{i_1 \dots i_k}$ is determined by a
pullback diagram of sets, as follows:
\[
\begin{CD}
X_{i_1 \dots i_k} \times_{X_{i_1 \dots i_{k-1}}} X_{i_1 \dots i_k} @>{\pi_1}>> X_{i_1 \dots i_k}\\
@V{\pi_2}VV @VV{t_{i_k}}V\\
X_{i_1 \dots i_k} @>{s_{i_k}}>> X_{i_1 \dots i_{k-1}}
\end{CD}.
\]

\item[Axioms] Composition must be \emph{associative}:
\[
(x \circ_i y) \circ_i z = x \circ_i (y \circ_i z),
\]
identities must behave as such under composition:
\begin{gather*}
\id_i s_i (x) \circ_i x = x =  x \circ_i \id_i t_i (x),\\
  \id_i (x \circ_j y) = \id_i (x) \circ_j \id_i(y) \qquad \text{for $i
    \ne j$},
\end{gather*}
and commute with each other and sources and targets:
\begin{gather*}
  \id_i \id_j = \id_j \id_i \qquad \text{for $i \ne j$},\\
  s_i \id_i (x) = t_i \id_i (x) = x,\\
  s_i \id_j (x) = \id_j s_i (x), \qquad t_i \id_j (x) = \id_j t_i (x),
  \qquad \text{for $i \ne j$};
\end{gather*}
compositions in different directions $i \ne j$ must commute with each
other (the \emph{interchange law}):
\[
(x \circ_i y) \circ_j (x' \circ_i y') = (x \circ_j x') \circ_i (y \circ_j y');
\]
compositions must be compatible with the source and target maps in the
following sense:
\begin{gather*}
  s_i (x \circ_i y) = s_i (x), \qquad t_i (x \circ_i y) = t_i (y),\\
  s_i (x \circ_j y) = s_i (x) \circ_j s_i (y), \quad t_i (x \circ_j
  y) = t_i (x) \circ_j t_i(y), \qquad \text{for $i \ne j$};
\end{gather*}
and sources and targets in different directions $i \ne j$ must commute
with each other:
\[
s_i s_j (x) = s_j s_i (x), \qquad t_i s_j (x) = s_j t_i (x), \qquad
t_i t_j (x) = t_j t_i (x).
\]
\end{description}
\end{df}

\begin{rem}
\begin{sloppypar}
  There is still other, equivalent single-set definition, which
  describes the set of $n$-morphisms and treats lower-dimensional
  morphisms as degenerate $n$-morphisms, see \cite{ehresmann2}.
\end{sloppypar}
\end{rem}

\begin{rem}
  Every strict $n$-category has the natural structure of an $n$-fold
  category, moreover, we have a functor $\nCat \to \Cat_n$, where
  $\nCat$ is the category of strict $n$-categories, cf.\ \cite[Section
  II.5]{maclane}. In terms of Definition~\ref{naive}, if we require
  that $k$-morphisms in every plane but the $x_1 x_2 \dots x_k$ plane
  be identities, we will get a definition of a strict
  $n$-category. This yields a morphism of theories $\Th{\Cat_n} \to
  \Th{\nCat}$.
\end{rem}

One advantage of Definition~\ref{naive} is that it allows us to talk
about \emph{$\infty$-fold $($or $\omega$-fold$)$ categories} directly,
without passing to the limit $n \to \infty$: to define an
$\infty$-fold category, we will just set $n = \infty$ in
Definition~\ref{naive}, while the index $i$ for directions will still
be finite, $1 \le i < \infty$, and $k$-morphisms will also span in a
finite number $k$, $0 \le k < \infty$, of directions $\{i_1, \dots,
i_k\}$. We will not use $\infty$-fold categories in this paper, except
in the following example.

\begin{ex}[The fundamental $\omega$-fold groupoid of a topological
  space] We have learned this version of Grothendieck's fundamental
  $\omega$-groupoid from U.~Tillmann \cite{tillmann}, see also
  R.~Brown \cite{brown}. For a topological space $X$, let the set
  $X_{i_1 \dots i_k}$ of $k$-morphisms be the set of pairs $(I,f)$,
  where $I = [a_1,b_1] \times [a_2, b_2] \times \dots$, where $a_1 \le
  b_1$, $a_2 \le b_2$, etc., so that $a_i = b_i$ whenever $i \not \in
  \{i_1, \dots, i_k\}$ and $a_i = b_i = 0$ for $i >> 0$, is a
  coordinate rectangular solid in $\R^\infty$ of dimension at most $k$
  and $f: I \to X$ is a continuous map. Composition of $k$-morphisms
  in the $i$th direction is defined, if and only if the rectangular
  solids fit together along their $i$th faces in $\R^\infty$ to form a
  larger rectangular solid and the restrictions of the maps from the
  solids to $X$ to the two faces are equal. In this case, the two maps
  glue to a map from their union to $X$ by continuity. If we fix $n$
  and use $\R^n$ instead of $\R^\infty$ in the above, we will obtain a
  $n$-truncation of that construction, which will be an $n$-fold
  category. For instance, for $n=1$ we get the (Moore) path category
  of the topological space.
\end{ex}

We will mention another example of a strict $n$-fold category, when we
will discuss embedded cobordisms with corners below.

\section{The Nerve of an $n$-Fold Category}

The \emph{nerve of a $($strict$)$ $n$-fold category} $X_\bullet$ is an
$n$-fold simplicial set $\mathbf{X}: (\mathbf{\Delta}^{\op})^n \to
\Set$, where $\mathbf{\Delta}$ is the standard simplex category. The
nerve may be constructed as the iterated nerve of a usual category,
using the fact that an $n$-fold category is a category object in the
category of $(n-1)$-fold categories. Thus, in order to construct the
simplicial nerve of an $n$-fold category, we first construct the nerve
of it as a category object: for $k > 0$, a $k$-simplex in this
simplicial object is a composable sequence of $k$ morphisms in our
category object, i.e., functors between $(n-1)$-categories; for $k =
0$, this is just an object, i.e., an $(n-1)$-category. Each
$(n-1)$-category gives rise to an $(n-1)$-fold simplicial set by
induction, and the above nerve, which is a simplicial object in the
category of $(n-1)$-fold categories, turns into a simplicial object in
the category of $(n-1)$-fold simplicial sets.

The following theorem gives a test for an $n$-fold simplicial set to
be the nerve of an $n$-fold category via an inner-horn filling
condition. Recall that for the standard simplex $\Delta^k$, considered
as a simplicial set, and any $0 \le j \le k$, the $j$th \emph{horn}
$\Lambda_j^k \subset \Delta^k$ is obtained from $\Delta^k$ by deleting
the interior and the face opposite the $j$th vertex.

\begin{thm}
\label{nerve}
An $n$-fold simplicial set $\mathbf{X}$ is the nerve of a strict
$n$-fold category, if and only if it satisfies the \emph{unique inner
  horn-filling condition:} for each sequence $\sigma_1, \dots,
\sigma_n$, where each $\sigma_i$ is either a simplex $\Delta^{k_i}$,
$k_i \ge 0$, or an inner horn $\Lambda_j^{k_i} \subset \Delta^{k_i}$,
$0 < j < k_i$, any \emph{inner multihorn} $(\sigma_1, \dots, \sigma_k)
\to \mathbf{X}$ may be completed to a multisimplex $(\Delta^{k_1},
\dots, \Delta^{k_n}) \to \mathbf{X}$.
\end{thm}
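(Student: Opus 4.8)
The plan is to induct on $n$, using as the base the classical nerve theorem for ordinary categories --- a simplicial set is the nerve of a category if and only if every inner horn has a unique filler --- together with its \emph{internal} version, proved by the same limit-theoretic argument: in any finitely complete category $\mathcal E$, a simplicial object $Y_\bullet$ in $\mathcal E$ is isomorphic to the nerve of a category object in $\mathcal E$ if and only if, for every inner horn $\Lambda_j^k\subset\Delta^k$ with $0<j<k$, the canonical map $Y_k\to Y_{\Lambda_j^k}$ into the limit over the horn is an isomorphism. I would apply this with $\mathcal E$ the category of $(n-1)$-fold simplicial sets, since an $n$-fold simplicial set $\mathbf X$ is nothing but a simplicial object, in any one chosen direction $i$, valued in $(n-1)$-fold simplicial sets.

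First I would isolate, for each $i\in\{1,\dots,n\}$, the \emph{one-direction condition} $(H_i)$: the unique inner multihorn-filling condition of Theorem~\ref{nerve} restricted to those sequences in which $\sigma_i$ is an inner horn or a simplex and each other $\sigma_\ell$ is a full simplex $\Delta^{k_\ell}$. Because an isomorphism of $(n-1)$-fold simplicial sets is just a levelwise bijection, unwinding definitions shows that $\mathbf X$ satisfies $(H_i)$ if and only if $\mathbf X$, viewed as a simplicial object in direction $i$, satisfies the internal inner horn-filling condition above, i.e.\ is the nerve in direction $i$ of a category object in $(n-1)$-fold simplicial sets. The next step, which I expect to be the main obstacle, is a Fubini-type lemma: $\mathbf X$ satisfies the full unique inner multihorn-filling condition if and only if it satisfies $(H_i)$ for every $i$. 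One implication is trivial; for the converse, given an inner multihorn $(\sigma_1,\dots,\sigma_n)\to\mathbf X$ I induct on the number of indices at which $\sigma_i$ is a genuine horn. If, say, $\sigma_1=\Lambda_{j_1}^{k_1}$, then for every cell $\Delta^{m_2}\boxtimes\cdots\boxtimes\Delta^{m_n}$ of the $(n-1)$-fold simplicial set $\sigma_2\boxtimes\cdots\boxtimes\sigma_n$ the restricted datum $\Lambda_{j_1}^{k_1}\boxtimes\Delta^{m_2}\boxtimes\cdots\boxtimes\Delta^{m_n}\to\mathbf X$ is an $(H_1)$-horn, hence admits a unique filler over $\Delta^{k_1}$; uniqueness forces compatibility with all face and degeneracy maps, so these fillers glue to an extension along $\Delta^{k_1}\boxtimes\sigma_2\boxtimes\cdots\boxtimes\sigma_n$, which has one fewer horn-direction, and the inductive step finishes the extension to $(\Delta^{k_1},\dots,\Delta^{k_n})$. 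Uniqueness of the total filler propagates the same way: any filler restricts cell by cell to the unique $(H_1)$-fillers, hence agrees with the one constructed. The delicate points are the bookkeeping of degeneracies in the gluing and checking that each intermediate inclusion is genuinely of $(H_i)$-type with all remaining directions full simplices.

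Finally I would assemble the induction. For the ``only if'' direction, the iterated-nerve construction described earlier in this section exhibits the nerve $\mathbf X$ of an $n$-fold category --- an internal category $Y_\bullet$ in $\Cat_{n-1}$ by Definition~\ref{inductive} --- as the direction-$1$ nerve of the category object $N^{(n-1)}(Y_\bullet)$ in $(n-1)$-fold simplicial sets, using that the $(n-1)$-fold nerve $N^{(n-1)}$ preserves finite limits, in particular the pullback $\mor\times_{\Ob}\mor$; this yields $(H_1)$, the inductive hypothesis applied to each $(n-1)$-fold category $Y_k$ yields $(H_2),\dots,(H_n)$, and the Fubini lemma upgrades these to the full condition. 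For the ``if'' direction, $(H_1)$ presents $\mathbf X$ as the direction-$1$ nerve of a category object $\mathbf Y_\bullet$ in $(n-1)$-fold simplicial sets; specializing the full multihorn condition for $\mathbf X$ to sequences with $\sigma_1=\Delta^k$ shows each $\mathbf Y_k$ satisfies the $(n-1)$-fold multihorn condition, so by the inductive hypothesis $\mathbf Y_0$, $\mathbf Y_1$, and $\mathbf Y_2=\mathbf Y_1\times_{\mathbf Y_0}\mathbf Y_1$ are nerves of $(n-1)$-fold categories; since $N^{(n-1)}$ is fully faithful and preserves and reflects these pullbacks, the category object $\mathbf Y_\bullet$ descends to a category object $Y_\bullet$ in $\Cat_{n-1}$, that is, an $n$-fold category, whose nerve is $\mathbf X$.
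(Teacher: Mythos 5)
Your proposal is correct and takes essentially the same approach as the paper, whose entire proof is the single remark that the theorem ``follows by iterating the $n=1$ statement, known as Boardman--Vogt's theorem'': your induction on $n$, via the internal one-direction nerve criterion $(H_i)$ and the Fubini-type gluing lemma, is exactly a careful spelling-out of that iteration. No further changes are needed.
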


\begin{proof}
  This theorem follows by iterating the $n=1$ statement, known as
  Boardman-Vogt's theorem \cite{boardman-vogt:structures}.
\end{proof}

The geometric realization of the nerve of an $n$-fold category
$X_\bullet$ is called the \emph{classifying space} of it, denoted $B
X_\bullet$.

\begin{ex}
  The classifying space of the fundamental $n$-fold groupoid of a
  topological space $X$ is homotopy equivalent to the space. For
  $n=1$, this follows from the observation that the path space of a
  topological space $X$ is homotopy equivalent to $X$. For higher
  $n$'s, the result follows by iteration.
\end{ex}

\section{Pseudo $n$-Fold Categories}
\label{pseudo}

For abstract cobordisms with corners, we will need to introduce
``controlled weakness'' into the notion of a strict $n$-fold
category. We will call the corresponding notion a pseudo $n$-fold
category, which will be a close relative of Marco Grandis' notion of a
\emph{symmetric weak $(n+1)$-cubical category}, see
\cite{grandis-I}. The principal difference is that we add a second
hexagon axiom, which makes the whole notion coherent, see Theorems
\ref{existence} and \ref{coherence}. Technical differences include our
omitting the symmetric structure and using strict units.

\begin{df}
  A \emph{pseudo $n$-fold category} is a certain \emph{weak model} of
  an $n$-fold category in $\Cat$. This means that a pseudo $n$-fold
  category has the same structure of morphisms $k\Mor$ for $k =0, 1,
  \dots, n$, sources and targets $\mathbf{s}, \mathbf{t}: k\Mor \to
  (k-1)\Mor$, identities $\bbid: (k-1)\Mor \to k\Mor$, and
  compositions $\circ: k\Mor \times_{(k-1)\Mor} k\Mor \to k\Mor$, $1
  \le k \le n$, as a (strict) $n$-fold category, see
  Definition~\ref{naive}, except that now $k\Mor$ is a category,
  rather than a set, and $\mathbf{s},\mathbf{t}$, $\bbid$, and
  compositions are functors. Thus, a pseudo $n$-fold category has
  extra $(k+1)$-morphisms in a ``transversal'' direction, the
  morphisms of the categories $k\Mor$, $k = 0, 1, \dots, n$.  Some of
  the axioms must be satisfied weakly, that is, up to natural
  isomorphism of functors. These natural isomorphisms are part of the
  structure, satisfying their own coherence axioms. We will go over
  this extra data and the axioms one by one in detail below. Letters
  $x$, $y$, $z$, etc., will be placeholder for objects in categories
  $k\Mor$.

\noindent
\textbf{Associators:} There must be natural isomorphisms
\begin{eqnarray*}
\alpha_i: \circ_i (\circ_i \times \id) & \to & \circ_i (\id \times
\circ_i),\\
(x \circ_i y) \circ_i z & \mapsto & x \circ_i (y \circ_i z),
\end{eqnarray*}
of functors
\[
k\Mor \times_{(k-1)\Mor} k\Mor \times_{(k-1)\Mor} k\Mor \to k\Mor
\]
for each $k = 1, \dots, n$ and $i = 1, \dots n$. (To be more precise,
$\alpha_i$ is a collection of isomorphisms depending on several other
indices.)

\noindent
\begin{sloppypar}
\textbf{Interchangers:} For each $k = 1, \dots, n$ and pair of distinct
  numbers $i$ and $j$ between 1 and $n$, there must be natural
  isomorphisms
\begin{eqnarray*}
  \beta_{ij}: \circ_j (\circ_i \times \circ_i) & \to & \circ_i (\circ_j \times
  \circ_j),\\
  (x \circ_i y) \circ_j (x' \circ_i y') & \mapsto & (x \circ_j x') \circ_i (y \circ_j y'),
\end{eqnarray*}
of functors
\[
k\Mor^4_{(k-1)\Mor^4} \to k\Mor,
\]
where $k\Mor^4_{(k-1)\Mor^4}$ is the category pullback describing
quadruples of $k$-morphisms composable horizontally (i.e., in the
$i$th direction) and vertically (i.e., in the $j$th direction), as per
the following square:
\[
\xymatrix{
\bullet \ar[r] \ar[d] \ar @{=>}[]-/ul 19pt/; [dr]+/ul 19pt/ ^x & \bullet \ar[r] \ar[d] \ar @{ =>} []-/ul 19pt/; [dr]
+/ul 19pt/^y  & \bullet \ar[d]\\
\bullet \ar[r] \ar[d] \ar @{ =>} []-/ul 19pt/; [dr]+/ul 19pt/^{x'} & \bullet \ar[r] \ar[d] \ar @{ =>} []-/ul 19pt/; 
[dr]+/ul 19pt/^{y'} & \bullet \ar[d]\\
\bullet \ar[r] &\bullet \ar[r] & \bullet}
\]
\end{sloppypar}

\noindent
\textbf{Axioms:} First of all, we must have
\[
\beta_{ji} = \beta_{ij}^{-1}.
\]
Next, the \emph{coherence axioms} below for the associators and
interchagers must be satisfied. The diagrams involved are diagrams of
natural transformations between morphism categories $k\Mor$, and
commutativity of diagrams means equality of the corresponding natural
transformations.

The following \emph{pentagon} diagram must commute for each $i$:
\def\vertices{\ifcase\xypolynode\or x\circ_i (y \circ_i (z \circ_i u))
  \or (x\circ_i y) \circ_i (z \circ_i u) \or ((x\circ_i y) \circ_i z)
  \circ_i u \or \hspace{-.5in} (x\circ_i (y \circ_i z)) \circ_i u \or
  x\circ_i ((y \circ_i z) \circ_i u) \hspace{-.5in} \fi}
\[
\xy/r5pc/:
 {\xypolygon5{~><{@{}}~*{\vertices}}}
{\ar^\alpha "2";"1"}
{\ar@<2pt>^-{\alpha} "3";"2" +/dl 14pt/ +/l 5pt/}
{\ar@<-7pt>_-{\alpha \circ_i \id} "3";"4" +/u 10pt/ +/l 3pt/}
{\ar_\alpha "4"-/l 13pt/;"5" !/l 33pt/
}
{\ar@<3ex>_-{\id \circ_i \alpha} "5" +/ 4pt/;"1" -/ 14pt/}
\endxy
\]
This axiom, in other words, requires that the two ways of moving
parentheses from one extreme to the other in a sequence
\[
\begin{CD}
\bullet @>x>> \bullet @>y>> \bullet @>z>> \bullet @>u>> \bullet 
\end{CD}
\]
of composable $k$-morphisms, using associators, be equal.

The following \emph{hexagon} diagram must commute for each pair of
distinct $i$ and $j$:
\def\vertices{\ifcase\xypolynode
\or (x \circ_j x') \circ_i ((y \circ_j y') \circ_i (z \circ_j z')) \hspace{-1in}
\or ((x\circ_j x') \circ_i(y \circ_j y')) \circ_i (z \circ_j z') \hspace{-1.5in}
\or ((x\circ_i y) \circ_j (x' \circ_i y')) \circ_i (z \circ_j z') \hspace{1.4in}
\or \hspace{-1.4in} ((x\circ_i y) \circ_i z) \circ_j  ((x' \circ_i y') \circ_i z')
\or \hspace{-1.4in}
(x\circ_i (y \circ_i z)) \circ_j (x' \circ_i (y' \circ_i z')) 
\or 
(x \circ_j x') \circ_i ((y \circ_i z) \circ_j (y' \circ_i z'))  \hspace{-1.5in}
\fi}
\[
\xy/r5pc/:
 {\xypolygon6{~><{@{}}~*{\vertices}}}
{\ar^-\alpha "2"!/r 30pt/ !/d 8pt/ ;"1"!/r 30pt/}
{\ar^-{\beta \circ_i \id} "3"+/r 17pt/;"2" +/l 20pt/
}
{\ar
^-{\beta} "4" !/l 40pt/;"3" !/l 40pt/ !/d 8pt/
}
{\ar_-{\alpha \circ_j \alpha} "4"!/l 40pt/ ;"5" !/l 40pt/ !/u 7pt/}
{\ar
_-{\beta} "5" -/l 6pt/;"6" !/l 30pt/
}
{\ar
_-{\id \circ_i \beta} "6" !/r 10pt/ !/u 8pt/
;"1" !/r 10pt/
}
\endxy
\]
This axiom requires that the two ways of moving parentheses from one
extreme to the other in a sequence of morphisms
\[
\xymatrix{
  \bullet \ar[r] \ar[d] \ar @{=>}[]-/ul 19pt/; [dr]+/ul 19pt/ ^x & \bullet \ar[r] \ar[d] \ar @{ =>} []-/ul 19pt/; [dr]
+/ul 19pt/^y & \bullet \ar[r] \ar[d] \ar @{ =>} []-/ul 19pt/; [dr]+/ul 19pt/^z  & \bullet \ar[d]\\
  \bullet \ar[r] \ar[d] \ar @{ =>} []-/ul 19pt/; [dr]+/ul 19pt/^{x'} & \bullet \ar[r] \ar[d] \ar @{ =>} []-/ul 19pt/; 
[dr]+/ul 19pt/^{y'} & \bullet \ar[r] \ar[d] \ar @{ =>} []-/ul 19pt/; [dr]+/ul 19pt/^{z'} & \bullet \ar[d]\\
  \bullet \ar[r] &\bullet \ar[r] &\bullet \ar[r] & \bullet}
\]
using associators and interchangers, be equal.

There is a \emph{second hexagon} axiom that must be satisfied. For
each triple of distinct $i$, $j$, and $k$, the following diagram must
commute:
\def\objectstyle{\scriptstyle}
\def\vertices{\ifcase\xypolynode \or ((x \circ_k x'') \circ_j
  (x' \circ_k x''')) \circ_i ((y \circ_k y'') \circ_j (y' \circ_k
  y''')) \hspace{-1.1in} \or ((x \circ_j x') \circ_k
  (x'' \circ_j x''')) \circ_i ((y \circ_j y') \circ_k (y'' \circ_j
  y'''))  \hspace{-1.8in} \or \hspace{-1.8in}
   ((x \circ_j x') \circ_i
  (y \circ_j y')) \circ_k ((x'' \circ_j x''') \circ_i (y'' \circ_j
  y'''))  \or
  \hspace{-1.1in}     ((x \circ_i y) \circ_j
  (x' \circ_i y')) \circ_k ((x'' \circ_i y'') \circ_j (x''' \circ_i
  y'''))  \or
 \hspace{-1.8in}  ((x \circ_i y) \circ_k
  (x'' \circ_i y'')) \circ_j ((x' \circ_i y') \circ_k (x''' \circ_i
  y'''))  
  \or  ((x \circ_k x'') \circ_i
  (y \circ_k y'')) \circ_j ((x' \circ_k x''') \circ_i (y' \circ_k
  y'''))
  \hspace{-1.8in} \fi}
\[
\xy/r5pc/:
 {\xypolygon6{~><{@{}}~*{\vertices}}}
{\ar^{\beta_{jk} \circ_i \beta_{jk}} "2"!/r 30pt/ !/d 8pt/ ;"1"!/r 30pt/}
{\ar^-{\beta_{ik}} "3"+/r 17pt/;"2" +/l 23pt/
}
{\ar
^-{\beta_{ij} \circ_k \beta_{ij}} "4" !/l 40pt/;"3" !/l 40pt/ !/d 8pt/
}
{\ar_-{\beta_{jk}} "4"!/l 40pt/ ;"5" !/l 40pt/ !/u 7pt/}
{\ar
_-{\beta_{ik} \circ_j \beta_{ik}} "5" -/l 6pt/;"6" !/l 32pt/
}
{\ar
_-{\beta_{ij}} "6" !/r 10pt/ !/u 8pt/
;"1" !/r 10pt/
}
\endxy
\]
This axiom requires that the two ways of moving parentheses from one
extreme to the other in a sequence of morphisms
\[
\xymatrix@=10pt{ & & \bullet \ar@{-}[rr] & & \bullet \ar@{-}[rr] & &
  \bullet \ar@{-}[dd]
  \\
  & \bullet \ar@{-}[ur] \ar@{-}[rr] & & \bullet \ar@{-}[ur]
  \ar@{-}[rr] & & \bullet \ar@{-}[ur] \ar@{-}[dd]
  \\
  \bullet \ar@{-}[ur] \ar@{-}[rr] \ar@{-}[dd] & & \bullet \ar@{-}[ur]
  \ar@{-}[rr] \ar@{-}[dd] & & \bullet \ar@{-}[ur] \ar@{-}[dd] & &
  \bullet \ar@{-}[dd]
  \\
  & & & & & \bullet \ar@{-}[ur] \ar@{-}[dd]
  \\
  \bullet \ar@{-}[rr] \ar@{-}[dd] & & \bullet \ar@{-}[rr] \ar@{-}[dd]
  & & \bullet \ar@{-}[ur] \ar@{-}[dd] & & \bullet
  \\
  & & & & & \bullet \ar@{-}[ur]
  \\
  \bullet \ar@{-}[rr] & & \bullet \ar@{-}[rr]
  & & \bullet \ar@{-}[ur] }
\]
using interchangers between different pairs of directions, be equal.

Like in a strict $n$-fold category, the following identities, where
$x$, $y$, $z$, etc., now stand for objects or morphisms in categories
$k\Mor$ and $i \ne j$, must hold:
\begin{gather}
\label{units}
\id_i s_i (x) \circ_i x = x =  x \circ_i \id_i t_i (x),\\
\label{units2}
  \id_i (x \circ_j y) = \id_i (x) \circ_j \id_i(y),\\
  \id_i \id_j = \id_j \id_i,\\
  s_i \id_i (x) = t_i \id_i (x) = x,\\
\label{units5}
  s_i \id_j (x) = \id_j s_i (x), \qquad t_i \id_j (x) = \id_j t_i (x),\\
  s_i (x \circ_i y) = s_i (x), \qquad t_i (x \circ_i y) = t_i (y),\\
  s_i (x \circ_j y) = s_i (x) \circ_j s_i (y), \qquad t_i (x \circ_j
  y) = t_i (x) \circ_j t_i(y),\\
  s_i s_j (x) = s_j s_i (x), \qquad t_i s_j (x) = s_j t_i (x), \qquad
  t_i t_j (x) = t_j t_i (x).
\end{gather}

There are extra axioms that require the identity morphisms be coherent
with the associators and the interchangers in the following way:
\begin{eqnarray}
\nonumber \alpha = \id: (\id_i s_i (x) \circ_i x) \circ_i y & \to & 
\id_i s_i(x) \circ_i ( x \circ_i y),\\
\nonumber \alpha = \id: (x \circ_i \id_i t_i (x)) \circ_i y & \to & 
x \circ_i (\id_i s_i (y) \circ_i y),\\
\label{units-coh}
\alpha = \id: (x \circ_i y) \circ_i \id_i t_i (y) & \to & 
x \circ_i (y \circ_i \id_i t_i (y)),\\
\nonumber \beta = \id: (x \circ_i y) \circ_j (\id_j t_j (x) \circ_i \id_j t_j (y))
& \to & (x \circ_j \id_j t_j (x)) \circ_i (y \circ_j \id_j t_j (y)),\\
\nonumber \beta = \id: (\id_j s_j(x) \circ_i \id_j s_j (y)) \circ_j (x \circ_i y)
& \to & (\id_j s_j (x) \circ_j x) \circ_i (\id_j s_j (y) \circ_j y).
\end{eqnarray}
\end{df}
A similar argument to one in the case of monoidal categories shows
that the first and the third equalities $\alpha = \id$ follow from the
second one, but we will not worry here about the independence of our
collection of axioms.

An important aspect in defining the notion of a weak higher category
is whether there are ``enough'' coherence axioms. Such aspects are
usually addressed by coherence theorems, which we will prove
below. Suppose we have a composable combination of
$k$-morphisms. Combinatorially, such a combination may be thought of
as a $k$-dimensional cube in $\R^n$ subdivided into rectangular
blocks, placeholders for the morphisms to be composed. Composing these
morphisms means specifying an order in which the blocks are being
glued. The blocks are not supposed to be moved around in the process.

\begin{thm}[Weak Coherence Theorem]
\label{existence}
Any two functors $(k\Mor)^p_c \to k\Mor$ in a pseudo $n$-fold category
built from morphism composition are related by a natural isomorphism
built from associators $\alpha$ and interchangers $\beta$. Here $p \ge
1$ is an integer and $(k\Mor)^p_c$ denotes the full subcategory in
$k\Mor^p$ of morphisms composable in a diagram $c$ given by a cube of
dimension $k$ made of $p$ blocks.
\end{thm}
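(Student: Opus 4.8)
The plan is to run the ``existence'' half of a Mac Lane type coherence argument: choose one canonical way of composing the $p$ blocks, and show that every other way is joined to it by a chain of associator and interchanger moves (together with the strict identities already imposed).

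First I would make the statement precise. Given the diagram $c$ --- a $k$-dimensional box cut into $p$ rectangular blocks --- a functor ``built from morphism composition'' is recorded by a \emph{composition tree} $T$: a rooted binary tree with $p$ leaves labelled by the blocks of $c$ and each internal node labelled by a direction $i \in \{1,\dots,n\}$, subject to the condition that the node's two subtrees span two sub-boxes meeting along a face orthogonal to $i$, so that $\circ_i$ is defined on them. Evaluating $T$ (substituting the functors $\circ_i$ and reading the domain off the iterated pullback) produces a functor $F_T \colon (k\Mor)^p_c \to k\Mor$, and conversely every functor assembled from compositions is some $F_T$. The theorem then says: for any two trees $T, T'$ over the same $c$ there is a natural isomorphism $F_T \Rightarrow F_{T'}$ obtained as a finite composite of whiskerings of the structure isomorphisms $\alpha_i$ and $\beta_{ij}$, modulo the strict identities \eqref{units}--\eqref{units5} and the canonical pullback identifications.

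Next I would fix a \emph{normal form} $N(c)$ and show every $F_T$ is canonically isomorphic to $F_{N(c)}$. Order the directions $1 < 2 < \dots < n$; since some $F_T$ exists, its root slices $c$ by a hyperplane, so there is a least direction $d$ in which $c$ can be cut. Cut $c$ along all hyperplanes orthogonal to $d$, getting slabs $c = c_1 \mid_d \dots \mid_d c_m$, take the right-bracketed composite $c_1 \circ_d (c_2 \circ_d (\dots))$, and recursively normalize each $c_r$ (which has strictly fewer blocks; the base case $p=1$ is trivial). To carry $T$ to $N(c)$ I would use three local moves, each realized by a natural isomorphism built only from $\alpha$, $\beta$, and \eqref{units}--\eqref{units5}: (i) if a node and a child share a label $i$, apply $\alpha_i^{\pm 1}$ to rebracket toward the right-bracketed shape; (ii) if a node labelled $j$ sits above two subtrees that are both $\circ_i$-composites with $i \ne j$, apply $\beta_{ij}$ to exchange the two slicing directions; (iii) absorb or insert identity blocks arising from the pullback description or from degenerate slabs using the strict unit laws. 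Running ``interchange the least active direction to the top, right-bracket it, recurse into the slabs,'' and composing the associated isomorphisms, gives $F_T \Rightarrow F_{N(c)}$; applying this to $T$ and $T'$ and composing with an inverse yields the desired $F_T \Rightarrow F_{T'}$.

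The main obstacle is the normalization step: checking that this restricted toolkit --- $\alpha$, $\beta$, and the strict unit laws, but \emph{not} the pentagon or the two hexagons, which are reserved for the uniqueness statement, Theorem~\ref{coherence} --- really suffices to reach $N(c)$, and that the procedure terminates. The delicate points are (a) that interchangers can always be used to bring the least available slicing direction to the top of the tree, which requires first arranging the two children into the $\circ_i$-composite shape demanded by $\beta_{ij}$, and (b) a termination argument: one needs a lexicographic complexity measure --- roughly, the number of top-level slices in the least direction, then the bracketing defect of that composition, then the block counts of the slabs --- that strictly decreases under the combined ``interchange-up then right-bracket'' step, so that no infinite regress occurs. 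Once these are in place, naturality on the full subcategory $(k\Mor)^p_c$ and compatibility with sources, targets, and the pullback domains are automatic, since $\alpha_i$ and $\beta_{ij}$ are already natural isomorphisms of functors and the remaining identities were imposed strictly.
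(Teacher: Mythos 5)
Your proposal is correct in outline but takes a genuinely different route from the paper's. The paper proves Theorem~\ref{existence} directly by induction on $p$: it compares the two given functors at their outermost compositions and, in each case (same top cut; same direction but different cuts; different directions), uses the induction hypothesis to re-expose the other functor's top-level cut inside each half, connecting the two functors with a single $\alpha_i$ or $\beta_{ij}$ conjugated by inductively obtained isomorphisms; no normal form appears. You instead route every composition tree through a canonical normal form (least cuttable direction outermost, right-bracketed, recurse into slabs), which is closer in spirit to the paper's Appendix proof of the Regular Coherence Theorem~\ref{coherence}, where a normal form is introduced for the uniqueness statement. Your approach buys a canonical hub that could later be reused for uniqueness; the paper's is shorter and never fixes a preferred order of directions. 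Two of your flagged ``delicate points'' deserve comment. First, termination is better handled by structural induction on the number of blocks than by an unspecified rewriting measure: repositioning a child's top cut is an appeal to the induction hypothesis for a sub-diagram with strictly fewer blocks, and each slab of the normal form likewise has fewer blocks than $c$, so no lexicographic complexity is needed (the one you sketch is never verified). Second, your interchange move as stated needs the two children's top cuts to lie on the \emph{same} hyperplane, not merely in the same direction, since otherwise $\beta_{ij}$ does not typecheck; the remedy is to choose a hyperplane $H$ orthogonal to $d$ cutting all of $c$, observe that $H$ restricts to a full cut of each child and that the resulting pieces, being unions of blocks of a decomposition admitting a composition tree, again admit trees, and then invoke the induction hypothesis to move each child's root cut onto $H$ before applying $\beta_{ij}$. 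This is the same kind of ``without loss of generality, by the induction hypothesis'' step that the paper's own proof takes tacitly, so once you make it explicit your argument reaches the same level of rigor as the published one.
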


\begin{proof}
  We need to compare two functors. We will be talking about their
  values on objects, which are $k$-morphisms in the pseudo $n$-fold
  category, making sure the constructions are natural. This will imply
  the functors are related by a natural transformation.

  Let us use induction on the number $p$ of morphisms being
  composed. The statement is trivial for $p=1$. For any $p \ge 2$,
  consider the last morphism compositions in each of the two
  functors. They must be compositions in either the same direction or
  two different ones.

  If they are compositions in the same direction $i$, then they are
  either the same, $x \circ_i y$, in which case, the induction
  assumption applied to $x$ and $y$ separately completes the argument,
  or different: $x \circ_i y'$ and $y" \circ_i z$. Without loss of
  generality, by the induction hypothesis, we can assume that $Sy' = y
  \circ z$ and $Ty" = x \circ_i y$ for some $y$ and natural
  transformations $S$ and $T$ made out of $\alpha$'s and $\beta$'s, in
  which case for the associator $\alpha_i: (x \circ_i y) \circ_i z \to
  x \circ_i (y \circ_i z)$, we have $\alpha_i (Ty" \circ_i z) = x
  \circ_i Sy'$. Then the composition of $T \circ_i \id$, $\alpha_i$,
  and $\id \circ_i S^{-1}$ takes $y" \circ_i z$ to $x \circ_i y'$.

  If the last compositions in the two functors are in different
  directions, $x \circ_i y$ and $x' \circ_j y'$, we can assume by the
  induction hypothesis that $x = S(x_1 \circ_j x_2)$, $y = T(y_1
  \circ_j y_2)$, and $x' = S' (x_1 \circ_i y_1)$, $y' = T'(x_2 \circ_i
  y_2)$ for some morphisms $x_1$, $x_2$, $y_1$, and $y_2$ and natural
  transformations $S$, $T$, $S'$, and $T'$, built out of associators
  and interchangers. Then the composition of these natural
  transformations and their inverses and an interchanger $\beta_{ij}$
  will move $x' \circ_j y'$ to $x \circ_i y$, as we have $\beta_{ij}
  ((S')^{-1} x' \circ_j (T')^{-1} y') = S^{-1} x \circ_i T^{-1} y$.
\end{proof}

A strong coherence theorem would state that a natural transformation
relating two functors in Theorem~\ref{existence} is unique. We prove
an important particular case of such statement in the Appendix.

\section{Monoidal Versions}

\subsection{Monoidal Pseudo $n$-Fold Categories}

\begin{df}
  A \emph{monoidal pseudo $n$-fold category} is a pseudo $(n+1)$-fold
  category with directions labeled $0, 1, \dots, n$ and such that
  $\Mor_{i_1 \dots i_k}$ is the category with one object
  $\mathbf{1}_{i_1 \dots i_k}$ and one morphism $\id_{\mathbf{1}}$ for
  any combination $\{i_1, \dots, i_k\}$ which does not contain $0$.
\end{df}

\begin{rem}
  In particular, the category $0\Mor$ of ``objects'' must have only
  one object $\mathbf{1}$. As concerns the category $1\Mor =
  \coprod_{i=0}^n \Mor_i$ of 1-morphisms, we have $\Mor_i =
  (\mathbf{1}_{i}, \id_{\mathbf{1}})$ for each $i \ge 1$, while
  $\Mor_0$ may be plentiful. We think of objects in $\Mor_0$ as the
  ``objects'' of our monoidal pseudo $n$-fold category. Each
  nontrivial $(k+1)$-morphism, which is an object in $\Mor_{0 i_1
    \dots i_k}$, has one and the same $\mathbf{1}_{i_1 \dots i_k}$ as
  the source and target in the 0th direction. Thus, any two such
  morphisms $x$ and $y$ may be composed in the 0th direction: we will
  denote this composition $x \otimes y$, and this will be the monoidal
  structure.
\end{rem}

\begin{df}
  A \emph{braided monoidal pseudo $n$-fold category} is a pseudo
  $(n+2)$-fold category with directions labeled $-1, 0, 1, \dots, n$
  and $\Mor_{i_1 \dots i_k}$ being the category with one object
  $\mathbf{1} = \mathbf{1}_{i_1 \dots i_k}$ and one morphism, as long
  as $\{-1, 0\} \not \subset \{i_1, \dots i_k\}$. We also require that
  the interchanger $\beta_{-1,0}: x \circ_0 y = (x \circ_{-1}
  \mathbf{1}) \circ_{0} (\mathbf{1} \circ_{-1} y) \to (x \circ_0
  \mathbf{1}) \circ_{-1} (\mathbf{1} \circ_{0} y) = x \circ_{-1} y$ be
  equal to the identity transformation, cf.\ the discussion of the
  Eckmann-Hilton argument below. Here $x$ and $y$ are any objects of
  $\Mor_{i_1 \dots i_k}$ with $\{-1, 0\} \subset \{i_1, \dots
  i_k\}$. The monoidal structure will be given by $x \otimes y := x
  \circ_0 y$.
\end{df}

\begin{rem}
\begin{sloppypar}
  In particular, the categories $0\Mor$ of ``objects'' and $\Mor_i$,
  $i = -1, 0, \dots, n$, of 1-morphisms will each have only one
  object. Each category $\Mor_{ij}$ will also have only one object,
  except for $\Mor_{-1,0}$.
\end{sloppypar}
\end{rem}

\begin{df}
  A \emph{symmetric monoidal pseudo $n$-fold category} is a pseudo
  $(n+3)$-fold category with directions labeled $-2, -1, 0, 1, \dots,
  n$ and $\Mor_{i_1 \dots i_k}$ being the category with one object
  $\mathbf{1}_{i_1 \dots i_k}$ and one morphism, as long as $\{-2, -1,
  0\} \not \subset \{i_1, \dots i_k\}$. We also require that for $i, j
  \le 0$ the interchangers $\beta_{ij}: (x \circ_{i} \mathbf{1})
  \circ_{j} (\mathbf{1} \circ_{i} y) \to (x \circ_j \mathbf{1})
  \circ_{i} (\mathbf{1} \circ_{j} y)$ be equal to the identity
  transformations, cf.\ the discussion of the Eckmann-Hilton argument
  below. The monoidal structure will again be given by $x \otimes y :=
  x \circ_0 y$, as above.
\end{df}

\subsection{The Eckmann-Hilton Argument}

In this section we would like to intrepret a symmetric monoidal pseudo
$n$-fold category as a pseudo $n$-fold category with a symmetric
monoidal structure, rather than a particular kind of an $(n+3)$-fold
category. Let $\mathbf{1}$ be a unique object (a 0-morphism), then
$\mathbf{1}_{-2,-1,0} = \id_{-2} \id_{-1} \id_0 \mathbf{1}$ will be
the identity 3-morphism, an object in $\Mor_{-2,-1,0}$. We will shift
terminology and interpret the objects in $\Mor_{-2,-1,0} \subset
3\Mor$ as objects of our symmetric monoidal pseudo $n$-fold
category. More generally, objects of $k\Mor$ will be interpreted as
$(k-3)$-morphisms.

Using the interchangers $\beta_{-1,0}$ and $\beta_{0,-1}$ and the fact
that $\mathbf{1} := \mathbf{1}_{-2,-1,0}$ is a unit for compositions
of morphisms in both directions $-1$ and $0$, we can use the
Eckmann-Hilton argument to get the following natural isomorphisms:
\begin{multline*}
  x \circ_0 y = (\mathbf{1} \circ_{-1} x) \circ_0 (y \circ_{-1}
  \mathbf{1}) \xrightarrow{\beta_{-1,0}} (\mathbf{1}
  \circ_0 y) \circ_{-1} (x
  \circ_0 \mathbf{1}) \\
  = y \circ_{-1} x = (y \circ_0 \mathbf{1}) \circ_{-1} (\mathbf{1}
  \circ_{0} x) \xrightarrow{\beta_{0,-1}} (y \circ_{-1}
  \mathbf{1}) \circ_{0} (\mathbf{1} \circ_{-1} x) = y \circ_0 x
\end{multline*}
for any objects $x$ and $y$ of $\Mor_{-2,-1,0}$, as well as any
composable in directions $-1$ and $0$ morphisms in $k\Mor$ for $k \ge
2$. Note that the second natural isomorphism $\beta_{0,-1}$ is
identity by definition, which implies that the two product laws are
equal:
\[
x \circ_0 y = x \circ_{-1} y,
\]
to which we referred as the monoidal structure, or the tensor product:
\[
x \otimes y = x\circ_0 y.
\]
Thereby, the composition of the isomorphisms in the Eckmann-Hilton
argument provides a braiding operator:
\[
\beta: x \otimes y \to y \otimes x.
\]
We can repeat the same argument to show that $x \circ_0 y = x
\circ_{-2} y$. To show that we get only one braiding operator and that
it is symmetric, we can use the second hexagon axiom with the left
middle vertex
\[
((\mathbf{1} \circ_{-2} \mathbf{1}) \circ_{-1} (x \circ_{-2}
\mathbf{1})) \circ_0 ((\mathbf{1} \circ_{-2} y) \circ_{-1} (
\mathbf{1} \circ_{-2} \mathbf{1})).
\]
The second hexagon diagram in this case will look as follows:
\def\vertices{\ifcase\xypolynode \or x \circ_{-2} y \hspace{-.8in} \or x \circ_{-2} y  \hspace{-.5in} \or 
\hspace{-.6in}
  x \circ_{0} y  \or
  \hspace{-.8in}  x \circ_{0} y \or
 \hspace{-.6in}  y \circ_{-1} x  
  \or   y \circ_{-1} x
  \hspace{-.5in} \fi}
\[
\xy/r5pc/:
 {\xypolygon6{~><{@{}}~*{\vertices}}}
{\ar^{\beta_{-1,0} = \id} "2"!/r 30pt/ !/d 8pt/ ;"1"!/r 27pt/}
{\ar^-{\beta_{-2,0} = \id} "3" +/r 5pt/;"2" +/l 12pt/
}
{\ar
^-{\beta_{-2,-1} = \id} "4" !/l 40pt/;"3" !/l 40pt/ !/d 8pt/
}
{\ar_-{\beta_{-1,0}} "4"!/l 40pt/ ;"5" !/l 40pt/ !/u 7pt/}
{\ar
_-{\beta_{-2,0} = \id} "5" -/r 5pt/;"6" !/l 22pt/
}
{\ar
_-{\beta_{-2,-1}} "6" !/r 10pt/ !/u 8pt/
;"1" !/r 6pt/ !/d 7pt/
}
\endxy
\]
The commutativity of the diagram implies that $\beta_{-2,-1}
\beta_{-1,0} = \id$, whence $\beta_{-2,-1} = \beta_{0,-1}$. Since the
same argument works for any permutation $(i,j,k)$ of $(-2,-1,0)$, we
get $\beta_{ij} = \beta_{ik} = \beta_{jk} = \beta_{ji}$. It follows
that all these braiding operators are equal and since $\beta_{ij} =
\beta_{ji}^{-1}$, we also have $\beta^2 = \beta_{ij}^2 = \id$.

\begin{rem}
  Classical monoidal categories may be described as particular
  examples of monoidal pseudo ($1$-fold) categories. For example, a
  braided monoidal category would be a braided monoidal pseudo
  category satisfying the following conditions: the associator
  $\alpha_1$ in the ``spatial'' direction 1 and the interchangers
  $\beta_{i1}$ for $i = -1, 0$ must be equal to identity. We
  anticipate that the classical coherence theorems of Mac Lane
  \cite{maclane:1963b,maclane} for monoidal and symmetric monoidal
  categories and of Joyal-Street \cite{joyal-street:1993,maclane} for
  braided monoidal categories (in the unitary case) follow from our
  Regular Coherence Theorem~\ref{coherence}.
\end{rem}

\section{Functors}
\label{functors}

  A \emph{functor} $F: \CC \to \DD$ between two pseudo $n$-fold
  categories $\CC$ and $\DD$ is a collection of functors $F: k\Mor
  (\CC) \to k\Mor(\DD)$ together with natural transformations
\[
\phi_{x,y}: Fx \circ_i Fy \to F (x \circ_i y),
\]
called the \emph{coherence maps}, commuting with sources, targets, and
identities, and satisfying the following coherence conditions, making
sure the natural transformations are compatible with the identity axioms,
associators, and interchangers:
\begin{gather*}
\phi_{\id_i s_i x,x} = \id_{Fx}, \quad \phi_{x,\id_i t_i x} =
\id_{Fx},
\\
\begin{CD}
  (Fx \circ_i Fy) \circ_i F z @>{\phi_{x,y} \circ_i \id}>> F (x \circ_i y) \circ_i Fz @>{\phi_{x \circ_i y, z}}
>> F((x \circ_i y) \circ_i z)\\
@V\alpha_iVV @. @VV{F(\alpha_i)}V\\
  Fx \circ_i (Fy \circ_i F z) @>{\id \circ_i \phi_{y,z}}>> F x \circ_i
  F(y \circ_i z)  @>{\phi_{x, y  \circ_ i z}}>> F (x \circ_i (y \circ_i z)),
\end{CD}\\
\xymatrix@C=14pt{
  (Fx \circ_i Fy) \circ_j (Fx' \circ_i Fy') 
  \ar[d]_{\beta_{ij}} \ar[r]^-{\phi \circ_j
    \phi} & F (x \circ_i
  y) \circ_j F(x' \circ_i y')  \ar[r]^{\phi} &  F ((x \circ_i y) \circ_j (x' \circ_i y'))
\ar[d]^{F (\beta_{ij})}\\
  (Fx \circ_j Fx') \circ_i (Fy \circ_j Fy') \ar[r]^-{\phi \circ_j
    \phi} &  F (x \circ_j x') \circ_i F(y \circ_j y') \ar[r]^{\phi} & F ((x \circ_j x') \circ_i (y
  \circ_j y')).
}
\end{gather*}
A \emph{monoidal functor} between monoidal pseudo $n$-fold categories
is just a functor between the corresponding pseudo $(n+1)$-fold
categories. Likewise, a \emph{braided $($symmetric$)$ monoidal
  functor} between braided (symmetric, respectively) monoidal pseudo
$n$-fold categories is a functor between the corresponding pseudo
$(n+2)$-fold ($(n+3)$-fold, respectively) categories.

\section{Higher Spans and Cospans}

In this section, we will give basic examples of symmetric monoidal
pseudo $n$-fold categories and prove the following theorem along the
way.

\begin{thm}
  $k$-Spans of sets $($more generally, topological spaces$)$,
  $k$-cospans of sets $($or topological spaces$)$, $k$-cospans of
  $($differential graded$)$ algebras for $k \le n \le
  \infty$ form symmetric monoidal pseudo $n$-fold categories.
\end{thm}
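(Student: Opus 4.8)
The plan is to treat all three families of examples by a single recipe and to reduce every axiom to the universal properties of finite limits (for the span examples) and of finite colimits (for the cospan examples). I will describe the $k$-spans of sets; the topological case and the $(\text{co})$span-of-dg-algebra cases are identical after replacing ``pullback in $\Set$'' by ``pullback in $\mathbf{Top}$'', respectively ``pushout of dg-algebras'', and, in the cospan cases, reversing arrows.

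First I would write down the underlying pseudo $n$-fold category in the sense of Section~\ref{pseudo}. A $k$-morphism in directions $i_1,\dots,i_k$ is a commuting grid diagram in $\Set$ of shape $\mathcal S^{k}$, where $\mathcal S$ is the span shape $(\bullet\leftarrow\bullet\rightarrow\bullet)$ and the $k$ factors are indexed by the chosen directions; a morphism in the transversal category $k\Mor$ is a natural transformation of such diagrams. The source and target $s_{i_k},t_{i_k}$ are restriction to the two faces transverse to direction $i_k$, the identity $\id_{i_k}$ is the corresponding degenerate slab, and $\circ_{i_k}$ glues two diagrams along a shared face, forming each new node as the fiber product of the two old nodes over the corresponding node of that face. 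Every strict identity demanded of a pseudo $n$-fold category --- the unit equations (\ref{units})--(\ref{units5}), compatibility of $\mathbf s,\mathbf t$ with composition, and commutation of sources and targets in distinct directions --- then holds literally, since each of them merely restricts, degenerates, or re-glues diagrams and never needs a choice of parenthesization.

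Next I would exhibit the associators and interchangers as canonical comparison maps. The two parenthesizations $(x\circ_i y)\circ_i z$ and $x\circ_i(y\circ_i z)$ are the two standard iterated-pullback computations of the limit over the three-block pasting diagram; $\alpha_i$ is the unique isomorphism between them compatible with the projections to $x$, $y$, $z$, and it is natural in $x,y,z$ because comparison maps of limits are, so $\alpha_i$ is a natural isomorphism of functors of the form considered in Theorem~\ref{existence}. Likewise, applied to a $2\times2$ array of $k$-morphisms composable in directions $i$ and $j$, the interchanger $\beta_{ij}$ is the unique projection-compatible isomorphism between the two ways of computing the limit over the whole array by composing rows first or columns first, whence $\beta_{ji}=\beta_{ij}^{-1}$ is immediate. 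Now all the coherence axioms --- the pentagon, both hexagons, and the unit coherences (\ref{units-coh}) --- follow from one remark: in each of these diagrams every vertex is, canonically, a finite limit whose cone projects to the fixed blocks being composed, every edge is a comparison map compatible with those projections, and a comparison map between two limits that is compatible with the projections to a common cone is unique; hence any two parallel paths of edges agree. This is the $n$-fold analogue of the automatic coherence of the Cartesian monoidal structure on a category with finite products.

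Finally, to obtain a symmetric monoidal pseudo $n$-fold category I would adjoin three extra directions $-2,-1,0$ whose composition is disjoint union $\sqcup$ (for dg-algebras, the coproduct of algebras), declare $\Mor_{i_1\dots i_k}$ trivial unless $\{-2,-1,0\}\subseteq\{i_1,\dots,i_k\}$, and reinterpret the objects of $\Mor_{-2,-1,0,i_1\dots i_k}$ as the $k$-spans, exactly as in the Eckmann--Hilton discussion of Section~\ref{pseudo}. Associators in the new directions are again canonical comparison maps, now of finite coproducts, and since all three extra directions carry the very same operation $\sqcup$, the interchangers $\beta_{-2,-1},\beta_{-2,0},\beta_{-1,0}$ are taken to be the identity, as required; coherence among them is automatic by the same limit/colimit-uniqueness principle, and the Eckmann--Hilton argument of Section~\ref{pseudo} then produces the symmetric braiding. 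The one genuinely new axiom is the interchange between $\otimes=\sqcup$ and a composition $\circ_i$, $i\ge1$: it asserts that $\sqcup$ commutes with the fiber products used in composition, i.e.\ that $\Set$ and $\mathbf{Top}$ are extensive (and, for the dg-algebra case, that the coproduct of algebras commutes with pushouts, which holds since a coproduct is itself a colimit). I expect this distributivity verification --- trivial for sets and spaces, needing a little care for dg-algebras --- together with the bookkeeping that identifies each composable configuration with a specific iterated limit and each structure map with the corresponding canonical comparison, to be the only places demanding real work; everything else is formal, and nothing in the argument is affected by allowing $n=\infty$, since each composable configuration is finite.
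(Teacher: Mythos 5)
Your coherence mechanism (every bracketed composite is a limit of the same pasting diagram, and comparison maps compatible with the projections to the constituent blocks are unique) is a legitimate and in fact cleaner route than the paper's, which defines $\alpha$ and $\beta$ pointwise by reparenthesizing tuples and checks the pentagon and both hexagons by noting that both paths effect the same reparenthesization. However, there is a genuine gap in your treatment of units. You take $\id_{i_k}$ to be the actual degenerate slab and assert that the strict identities \eqref{units}--\eqref{units5} ``hold literally.'' They do not: composing a span $x$ with the degenerate slab on its source face by the fiber-product recipe produces a span whose nodes consist of pairs $(a,a')$ with matching images, which is canonically isomorphic to $x$ but not equal to it, while the definition of a pseudo $n$-fold category used here demands equality on the nose (there are no unitors in the structure, and \eqref{units-coh} presupposes strict units, so with weak units those axioms are not even well posed). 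The paper deals with precisely this point by adjoining \emph{formal} units $\id_i(x)$, composed strictly by fiat, and by declaring the associators and interchangers involving them to be identities so that \eqref{units-coh} holds; the same device is needed for the monoidal unit $\mathbf{1}$ in the directions $-2,-1,0$, where the paper sets $x\otimes\mathbf{1}:=x$ formally rather than using the one-point space, the empty set, or the ground field. Your proposal omits this step, so as written the construction does not satisfy the stated definition.

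Two secondary deviations are worth flagging. First, your monoidal structures differ from the paper's: for higher spans the paper takes Cartesian product (not disjoint union), and for cospans of dg algebras it takes tensor product, with composition given by the relative tensor product $A_2\otimes_{A_3}A_4$ rather than the pushout (amalgamated coproduct) of associative dg algebras; your choices can plausibly be made to yield \emph{some} symmetric monoidal pseudo $n$-fold structure, but the distributivity you invoke (extensivity, ``coproduct commutes with pushout'') must then be checked against the compositions actually used, and it does not transfer verbatim to the paper's $\otimes$-based structure. Second, the required identity interchangers for $i,j\le 0$ concern only the specific composites $(x\circ_i\mathbf{1})\circ_j(\mathbf{1}\circ_i y)$; the remaining interchangers in nonpositive directions are the canonical symmetry of the product (or of $\sqcup$), and once formal units are in place your uniqueness-of-comparison argument does cover their coherence.
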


\subsection{Higher Spans of Sets and Topological Spaces}

We will limit the discussion here to higher spans of topological
spaces only. This will cover the case of higher spans of sets, if you
regard sets as spaces with discrete topology. For $k \ge 0$, a
$k$-\emph{span $($correspondence$)$ of topological spaces} is a
commutative diagram of continuous maps between topological spaces,
which is shaped as a barycentric subdivision of a $k$-dimensional
cube:
\[
\xymatrix@=10pt{ & & \bullet \ar@{-}[rr] & & \bullet \ar@{-}[rr] & &
  \bullet \ar@{-}[dd]
  \\
  & \bullet \ar@{-}[ur] \ar@{-}[rr] & & \bullet \ar@{-}[ur]
  \ar@{-}[rr] & & \bullet \ar@{-}[ur] \ar@{-}[dd]
  \\
  \bullet \ar@{-}[ur] \ar@{-}[rr] \ar@{-}[dd] & & \bullet \ar@{-}[ur]
  \ar@{-}[rr] \ar@{-}[dd] & & \bullet \ar@{-}[ur] \ar@{-}[dd] & &
  \bullet \ar@{-}[dd]
  \\
  & & & & & \bullet \ar@{-}[ur] \ar@{-}[dd]
  \\
  \bullet \ar@{-}[rr] \ar@{-}[dd] & & \bullet \ar@{-}[rr] \ar@{-}[dd]
  & & \bullet \ar@{-}[ur] \ar@{-}[dd] & & \bullet
  \\
  & & & & & \bullet \ar@{-}[ur]
  \\
  \bullet \ar@{-}[rr] & & \bullet \ar@{-}[rr]
  & & \bullet \ar@{-}[ur] }.
\]
The edges in the diagram must be maps pointed away from the barycenter
of each $m$-face, $1 \le m \le k$, toward the barycenters of the
bounding $(m-1)$-faces. All the squares of maps in the diagram must
commute. To be included as $k$-morphisms of a pseudo $n$-fold
category, $k$-spans also need to have directions $1 \le i_1, \dots,
i_k \le n$ assigned to the axes of the $k$-cube, so that the two
$(k-1)$-faces bounding the cube in direction $i_p$ are given by the
equations $x_{i_p} = 0$, the source face, and $x_{i_p} = 1$, the
target face. Two $k$-spans may be composed in direction $i_p$,
whenever the target of one $k$-span conicides with the source of the
other in this direction. To compose such $k$-spans, Each sequence
\[
A_1 \leftarrow A_2 \rightarrow A_3 \leftarrow A_4 \rightarrow A_5
\]
of maps in direction $i_p$ that will show up in the diagram for the
composition needs to be converted into a sequence $A_1 \leftarrow A'
\to A_5$ by taking the canonical pullback (a.k.a.\ fibered product)
\[
\xymatrix{
& & A' \ar@{.>}[dl] \ar@{.>}[dr]\\
& A_2 \ar[dl] \ar[dr]^{p_1} & & A_4 \ar_{p_2}[dl] \ar[dr]\\
A_1 & & A_3 & &  A_5,
}
\]
where
\[
A' = A_2 \times_{A_3} A_4 := \{ (a_2, a_4) \in A_2 \times A_4 \; | \;
p_1(a_2) = p_2 (a_4) \}.
\]
Note that each square of maps in the diagram of the resulting span
will still be commutative.

We should be careful to use formal units $\id_i (x)$, rather than
actual $k$-spans with all maps in the $i$th directions being identity
maps. Otherwise, these units would be weak and we will have to burden
our definition of a pseudo $n$-fold category by weak units. Thus,
\emph{formal units} $\id_i (x)$ for all $(k-1)$-morphisms $x$ missing
direction $i$ shall be added to the set of $k$-morphisms. This in fact
means that a $k$-morphism in given directions will be represented by
an $m$-span $x$ for some $m \le k$ and, when $m < k$, thought of as a
formal identity morphism, based on the $m$-span $x$, in the directions
complementary to those present in $x$. Formal units must be composed
as strict units, i.e., in such a way that they satisfy the identities
\[
\id_i s_i (x) \circ_i x = x =  x \circ_i \id_i t_i (x).
\]
One can easily check that the formal units will satisfy the other
axioms \eqref{units2}-\eqref{units5} involving units in the definition
of a pseudo $n$-fold category.

Higher spans described above are objects of the categories of
$k$-morphisms. Morphisms in these categories are (continuous) maps
between higher spans, which are nothing but maps from the respective
vertices of one higher span to another, commuting with all the maps
between the vertices within the spans. The associators and
interchangers, described in the next paragraph, will be morphisms
between higher spans.

The associators and interchangers on compositions of higher spans are
defined at the level of points by rearranging parentheses in the
canonical pullbacks, as follows. Composition of three higher spans in
one direction boils down to composing usual spans like
\[
A_1 \leftarrow A_2 \rightarrow A_3 \leftarrow A_4 \rightarrow A_5
\leftarrow A_6 \to A_7
\]
in that direction. The corresponding associator $\alpha$ will act as
follows:
\[
\alpha ((a_2, a_4),a_6)  := (a_2, (a_4, a_6)), \qquad a_i \in A_i.
\]
If we compose four spans in two directions as per the following diagram:
\[
\begin{CD}
\bullet @<<< \bullet @>>> \bullet @<<< \bullet @>>> \bullet\\
@AAA @AAA @AAA @AAA @AAA\\
\bullet @<<< A_1 @>>> \bullet @<<< A_2 @>>> \bullet\\
@VVV @VVV @VVV @VVV @VVV\\
\bullet @<<< \bullet @>>> \bullet @<<< \bullet @>>> \bullet\\
@AAA @AAA @AAA @AAA @AAA\\
\bullet @<<< B_1 @>>> \bullet @<<< B_2 @>>> \bullet\\
@VVV @VVV @VVV @VVV @VVV\\
\bullet @<<< \bullet @>>> \bullet @<<< \bullet @>>> \bullet,
\end{CD}
\]
the corresponding interchanger will again rearrange parentheses, as
follows:
\[
\beta ((a_1,a_2),(b_1,b_2))  :=  ((a_1,b_1),(a_2,b_2)), \qquad a_i
\in A_i, b_i \in B_i.
\]
The pentagon and both hexagon axioms will hold, because the arrows in
each polygon are defined by the same rearrangement of parentheses for
elements of the sets sitting at vertices of the spans as for the very
spans participating in compositions, e.g., in the pentagon diagram,
the map $\alpha \circ_I \id$ may be defined as follows:
\begin{eqnarray*}
  \alpha \circ_i \id: ((x \circ_i y ) \circ_i z) \circ_i u
  & \to & (x \circ_i (y \circ_iy)) \circ_i u,\\
  (((a_2,a_4),a_6),a_8) & \mapsto & ((a_2,(a_4,a_6)),a_8).
\end{eqnarray*}
Thus, the fact that going from the far left vertex of the polygon to
the far right vertex along the upper and the lower paths results in
one and the same vertex implies that these two paths are equal
set-theoretically, and thereby the diagram commutes.

The associators and interchangers on compositions of morphisms
involving units should be set to equal identity, so that the axioms
\eqref{units-coh} are satisfied.

The argument above ignores the monoidal structure and so far shows
that $k$-spans of topological spaces form a pseudo $n$-fold
category. Essentially, the monoidal structure is given by Cartesian
product, but to add it within our framework, we will need to shift $n$
by 3 and think of $k$-spans of topological spaces in directions $i_1,
\dots, i_k$ between 1 and $n$ as $(k+3)$-morphisms (as well as
degenerate $(l+3)$-morphisms for $l > k$) of the symmetric monoidal
pseudo $n$-fold category in directions $-2, 1, 0, i_1, \dots,
i_k$. The degenerate morphisms play the role of formal units in
positive directions, but we also need to add formal units for the
monoidal structure: a single morphism $\mathbf{1}$ in every
multi-direction missing $0, -1, -2$. Compositions of morphisms in
positive directions are given by composition of higher spans, as
above. Compositions of higher spans in negative directions $-2, -1$,
and 0 are all given by Cartesian products of higher spans and denoted
$\otimes$: one takes Cartesian products of the respective entries in
the composed spans and products of maps for the edges of the product
span. When composing with formal units $\mathbf{1}$ in negative
directions, we should be careful to define composition formally as $x
\otimes \mathbf{1} := x$ and $\mathbf{1} \otimes x := x$, rather than
use the actual one-point space instead of $\mathbf{1}$. This avoids
the issue of weak units for the monoidal structure. For all $i,j \le
0$, we define the interchangers $\beta_{ij}: x \otimes y = (x
\circ_{i} \mathbf{1}) \circ_{j} (\mathbf{1} \circ_{i} y) \to (x
\circ_j \mathbf{1}) \circ_{i} (\mathbf{1} \circ_{j} y) = x \otimes y$
as the identity transformations and the interchangers $\beta_{ij}: x
\otimes y = (\mathbf{1} \circ_{i} x) \circ_j (y \circ_{i} \mathbf{1})
\to (\mathbf{1} \circ_j y) \circ_{i} (x \circ_j \mathbf{1}) = y
\otimes x$ as the canonical set-theoretic interchange law for
Cartesian products. (When either $x$ or $y$ is a unit itself, we will
define the interchanger to be the identity.) The polygonal coherence
axioms will be satisfied for the same reason they were satisfied for
compositions of higher spans above: now we are using Cartesian
product, as opposed to more general fibered product.

\subsection{Higher Cospans of Sets and Topological Spaces}

Here we will ``dualize'' the previous example and consider
$k$-\emph{cospans} (a.k.a. ``\emph{cocorrespondences}'') \emph{of
  topological spaces $($or sets$)$}, which will be commutative
$k$-dimensional cubical diagrams of spaces (respectively, sets) at the
barycenters of all faces and continuous (respectively, arbitrary) maps
directed from to barycenters of faces from the barycenters of the
bounding faces. The monoidal structure will be given by disjoint
union, which may be defined as follows in our set-theoretic universe:
\[
x \coprod y := x \times \{x\} \cup y \times \{y\},
\]
where $\{a\}$ is the one-point set whose only element is $a$. The
monoidal units will again be formal, rather than the empty
set. Compositions of morphisms will be defined by using canonical
set-theoretic pushouts
\[
\xymatrix{
& & A'\\
& A_2  \ar@{.>}[ur]   & & A_4  \ar@{.>}[ul]\\
A_1 \ar[ur] & & A_3 \ar[ul]^{i_1} \ar_{i_2}[ur]& &  A_5\ar[ul],
},
\]
where
\[
A' = A_2 \cup_{A_3} A_4 := (A_2 \coprod A_4) / \{i_1(a_3) \sim i_2
(a_3) \text{ for all } a_3 \in A_3\}.
\]
The rest of the argument goes exactly the same way as for higher spans
above.

\subsection{Higher Cospans of Algebras}

This example may be thought of as another dualization of the example
of higher spans of sets. A $k$-cospan of \emph{differential graded
  $($dg$)$} algebras (over a field) is a commutative $k$-cubical
diagram of dg associative algebras placed at the barycenters of all
faces and dg-algebra morphisms directed toward the barycenter of each
face from the barycenters of the bounding faces. The monoidal
structure is given by tensor product $x \otimes y$ of the dg algebras
sitting at the respective vertices of the two $k$-cospans. The
monoidal units will again be formal, rather than the ground field
literally. Compositions of morphisms will be defined by using
canonical tensor products over a dg associative algebra: $A_2
\otimes_{A_3} A_4$. The rest of the argument repeats the arguments for
higher spans and cospans above.

\subsection{Higher Spans of Coalgebras}

This example is a linearization of the example of higher spans of sets
or topological spaces and also linear dual to the example of higher
cospans of algebras. More precisely, if you have a higher span of
toplogical spaces, applying the functors of chains or homology, you
get higher spans of dg coalgebras. A $k$-span of dg coalgebras over a
field is a commutative $k$-cubical diagram of dg coassociative
coalgebras sitting at the barycenters of all faces and dg-coalgebra
morphisms directed from the barycenter of each face to the barycenters
of the bounding faces. The monoidal structure is given by tensor
product of the dg coalgebras. The monoidal units are formal, and
composition of spans is defined using the tensor product of dg
coalgebras relative to a morphism to another coalgebra.

\section{Cobordisms with Corners}
\label{cobordisms}

In this section we are going to prove the following theorem.
\begin{thm}
\label{cob-thm}
Abstract $k$-cobordisms for $k \le n \le \infty$ form a symmetric
monoidal pseudo $n$-fold category.
\end{thm}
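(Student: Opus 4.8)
The plan is to construct the symmetric monoidal pseudo $n$-fold category of abstract $k$-cobordisms directly, following the same blueprint used above for higher spans and cospans, but with the subtlety that the "composition by pullback/pushout" operation is replaced by "gluing of manifolds with corners along boundary faces." First I would fix precise definitions: an abstract $k$-cobordism in directions $i_1, \dots, i_k$ is a compact smooth manifold with corners $W$, of codimension $\le k$ in its corner stratification, equipped with a collar structure and a decomposition of its boundary faces into source and target faces in each of the $k$ directions, so that $W$ is shaped like a $k$-cube: the face $x_{i_p}=0$ is the source, $x_{i_p}=1$ the target, and the lower-dimensional corner strata are themselves cobordisms in fewer directions. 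As with spans, $j$-morphisms for $j \le k$ are degenerate cobordisms (formal identities in the missing directions), and the "transversal" categories $k\Mor$ have as morphisms the diffeomorphisms (or appropriate equivalence data) between cobordisms respecting all the structure. Composition $\circ_{i_p}$ of two $k$-cobordisms that agree on the relevant face is defined by gluing along that face using the collars; the result is again a smooth manifold with corners, and the collar data make the gluing well-defined. Crucially, I would use \emph{formal units} $\id_i(x)$ for collar-like cylinders, exactly as in the span case, so that the unit axioms \eqref{units} hold strictly rather than weakly.

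The next block is to produce the associators $\alpha_i$ and interchangers $\beta_{ij}$. Gluing of manifolds along a common face is associative and satisfies the interchange law only up to canonical diffeomorphism — the diffeomorphism that "re-collars" $(W_1 \cup W_2) \cup W_3$ to $W_1 \cup (W_2 \cup W_3)$, and similarly the one realizing $(W \cup_i W') \cup_j (W'' \cup_i W''') \cong (W \cup_j W'') \cup_i (W' \cup_j W''')$. These canonical diffeomorphisms are the associators and interchangers; they are natural with respect to morphisms of cobordisms because they are built functorially out of the collar data. I would verify the axioms $\beta_{ji} = \beta_{ij}^{-1}$, the pentagon, and the two hexagons by the same principle invoked for spans: each edge of each coherence polygon is induced by a specific, explicitly described reorganization of the gluing pattern (a recollaring), and going around either side of the polygon produces the \emph{same} reorganization of the underlying cube of blocks — the blocks are never moved, only the order of gluing changes — hence the two composite diffeomorphisms agree. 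The unit-coherence axioms \eqref{units-coh} are handled by declaring the associators/interchangers involving formal units to be identities, which is consistent precisely because formal units are glued strictly. Then I would add the symmetric monoidal structure by shifting $n \mapsto n+3$ with extra directions $-2,-1,0$: disjoint union of cobordisms along the three negative directions, with formal monoidal units $\mathbf 1$ (not the empty manifold), $\beta_{ij}$ for $i,j \le 0$ either identities or the canonical symmetry of disjoint union, exactly mirroring the span construction.

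**Main obstacle.**

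The genuinely delicate point — and the part that will require the most care — is the treatment of \emph{smoothness and collars for manifolds with corners} under gluing. Unlike pullbacks of sets, which are strictly associative on the nose, gluing smooth manifolds along a codimension-one (or higher-codimension corner) face requires a choice of collar, and different choices give diffeomorphic but not equal results; moreover one must check that the glued object is again a manifold with corners with a \emph{coherent} corner stratification, so that it can serve as a $k$-morphism with well-defined source/target faces in the remaining directions. I would address this either by working with germs of collars (so the collar data is canonical once a germ is fixed) or, more robustly, by passing to diffeomorphism classes only at the level needed while keeping enough rigidity that the associators and interchangers are still well-defined natural isomorphisms in $k\Mor$ — the categories $k\Mor$ being designed with exactly enough morphisms (collar-preserving diffeomorphisms) to absorb these ambiguities. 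A secondary technical nuisance is the bookkeeping for corners of codimension $\ge 2$: when gluing in direction $i_p$, the source/target faces in the \emph{other} directions $i_q$ must themselves glue compatibly, so one must check the identities $s_i(x \circ_j y) = s_i(x) \circ_j s_i(y)$ etc. at the level of manifolds — this is routine but must be stated carefully. Everything else is, as in the span and cospan cases, a matter of observing that the coherence polygons commute because both paths implement the identical recombination of a fixed cube of blocks.
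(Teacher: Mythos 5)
Your proposal is correct and follows essentially the same route as the paper: the paper's proof simply observes that cobordisms with corners are higher cospans of sets equipped with extra structure (smoothness, face labels, collars), invokes the already-established cospan result with its canonical pushout compositions, formal units, and coherence-by-identical-recombination argument, and notes that the collars (taken as germs, exactly as you suggest) guarantee the smooth structure and smooth maps glue under composition. Your write-up just spells out in situ what the paper delegates to the cospan section, so there is no substantive difference.
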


\begin{rem}
  There is a notion of cobordisms with corners, embedded nicely in
  coordinate rectangular solids in $\R^\infty$, which leads to a
  \emph{strict} $n$-fold category, see Tillmann \cite{tillmann}. We
  will not use it here, as we are ultimately interested in extended
  TQFTs, which will be functors from a suitable $n$-fold category of
  cobordisms with corners to one of the pseudo $n$-fold categories of
  higher (co)spans. We are not aware of any strict model of the
  latter, therefore there will be little benefit in using a strict
  model of the former.
\end{rem}

An $n$-dimensional manifold with corners is roughly a space smoothly
modeled on $\R_+^n$, where $\R_+ = [0, \infty)$ is the closed positive
real line.

\begin{df}
  A (smooth) $n$-\emph{manifold with corners} is a second-countable,
  Hausdorff topological space $M$ with a sheaf $\F$ of $\R$-algebras,
  which is a subsheaf of the sheaf of $\R$-algebras of continuous
  real-valued functions on $M$, such that $(M,\F)$ is locally
  isomorphic to $(\R_+^n, C^\infty)$. For each $k = 0, 1, \dots, n$,
  an \emph{open $k$-face} of $M$ is a connected component of the
  locally closed submanifold of $M$ consisting of points corresponding
  to an open $k$-dimensional face of $\R_+^n$. A \emph{$($closed$)$
    $k$-face} of $M$ is the closure of an open $k$-face.
\end{df}

\begin{df}
  An $n$-\emph{cobordism $M$ with corners}, see Figure~\ref{whistle},
  is a compact $n$-manifold with corners with the following extra
  data. The cobordism $M$ must be locally modeled on $(I^n,
  C^\infty)$, so that the information about which open face of the
  cube each point of $M$ corresponds to is part of the data. Thus,
  each open (and thereby closed) $k$-face of $M$ is labeled by a
  combination $\{i_1, i_2, \dots, i_{n-k}\}$ of $n-k$ out of $n$
  numbers $1, 2, \dots, n$ and a sequence $(\eps_{i_1}, \dots,
  \eps_{i_{n-k}})$ of zeros and ones, which define the $k$-face of the
  cube $I^n$ by the equations
  \begin{equation}
    \label{k-face}
    x_{i_1} = \eps_{i_1}, x_{i_2} = \eps_{i_2}, \dots, x_{i_{n-k}}
    = \eps_{i_{n-k}}.
  \end{equation}
  These labelings must be compatible between faces in the sense that
  each $k$-face labeled by the equations \eqref{k-face} must locally
  be the intersection of $k$ $(n-1)$-faces labeled by the equations
  $x_{i_1} = \eps_{i_1}, x_{i_2} = \eps_{i_2}, \dots, x_{i_{n-k}} =
  \eps_{i_{n-k}}$. Each $k$-face $F$ of $M$ must be provided with a
  \emph{collar}, which is the germ of a diffeomorphism of an open
  neighborhood of $F \times \{(\eps_{i_1}, \dots, \eps_{i_{n-k}})\}$
  in $F \times I^{n-k}$ with an open neighborhood of $F$ in
  $M$. Coordinates used in this copy of $I^{n-k}$ are $(x_{i_1},
  x_{i_2}, \dots, x_{i_{n-k}})$. Here a \emph{germ} is an equivalence
  class of such diffeomorphisms; two such diffeomorphisms of open
  neighborhoods of $F$ in $M$ are \emph{equivalent}, if they are equal
  over the intersection of the neighborhoods. The collars at different
  faces must be compatible with each other, that is, for any
  $(k-1)$-subface $F'$ of each $k$-face $F$ given by the equation
  $x_{i_{n-k+1}} = \eps_{i_{n-k+1}}$, the restriction of the collar $F
  \times I^{n-k} \to M$ to $F' \times I^{n-k}$ must coincide with the
  restriction of the collar $F' \times I^{n-k+1} \to M$ to the subset
  $F' \times I^{n-k}$ given by the equation $x_{i_{n-k+1}} =
  \eps_{i_{n-k+1}}$.

  For each $j =1, \dots, n$, the $(j,+)$-\emph{boundary} $\del^+_j M$
  of $M$ is the union of all the $(n-1)$-faces given by the equation
  $x_j = 1$. Similarly, the $(j,-)$-\emph{boundary} $\del^-_j M$ is
  the union of the $(n-1)$-faces given by the equation $x_j=0$. The
  $(j,+)$-boundary inherits the structure of an $(n-1)$-cobordism with
  corners, with the model $(n-1)$-cube $I^{n-1} \subset I^n$ given by
  the equation $x_j = 1$. The same for the $(j,-)$-boundary and the
  equation $x_j = 0$.
\end{df}

\begin{figure}
\includegraphics[width=7cm]{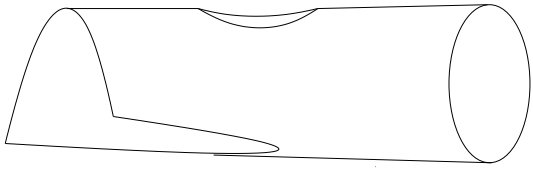}
\caption{The Whistle}
\label{whistle}
\end{figure}

\begin{rem}
  An $n$-cobordism with corners is automatically a \emph{manifold with
    faces} under the terminology of \cite{jaenich,laures} and moreover
  a \emph{manifold with $n$ distinguished faces} $\del^+_1 M \cup
  \del^-_1 M, \dots, \del^+_n M \cup \del^-_n M$, see
  \cite{grandis-III}, also known as an $\langle n
  \rangle$-\emph{manifold}, see \cite{jaenich,laures}.
\end{rem}

\begin{sloppypar}
Abstract $n$-cobordisms $M$ and $N$ with corners may be
\emph{composed} to an $n$-cobordism $M \circ_j N$ with corners along
the $j$th boundary for any $j = 1, \dots, n$, if the $(j,+)$-boundary
$\del^+_j M$ coincides with the $(j,-)$-boundary $\del^-_j N$ of $N$
(literally, as sets with the structure of a smooth $(n-1)$-cobordism
with corners). As a topological space, $M \circ_j N$ is the canonical
topological pushout
\[
\begin{CD}
  F_j @>{f_+}>> N\\
  @V{f_-}VV @VVV\\
  M @>>> M \circ_j N,
\end{CD}
\]
where $F_j = \del^+_j M = \del^-_j N$ is the \emph{seam}, the common
boundary of $M$ and $N$ along which the composition occurs. The
pushout is defined as a quotient of the disjoint union, as follows:
\[
M \circ_j N := M \coprod N /\sim,
\]
where $x \sim y$, if there exists $z \in F_j$ such that $f_- (z) = x$
and $f_+ (z) = y$. The smooth structure along the seam $F_j$ is
defined by requiring that the map of a neighborhood of $F_j \times
\{0\}$ in $F_j \times \R$ to $M \circ_j N$ given by taking the union
of the collar of $F_j$ in $M$ (after a coordinate change $x_j \mapsto
x_j - 1$) and the collar of $F_j$ in $N$ be smooth. The faces of the
composition cobordism $M \circ_j N$ are just the unions of faces of
$M$ and $N$ except the faces belonging to the seam, which are regarded
as canceling out. The faces of $M \circ_j N$ are labeled the same way
as their components on $M$ and $N$. These labels at the
lower-dimensional faces on the seam $F_j$ also match, because it was
assumed to be equal to the outgoing $j$th boundary of $M$ and the
incoming $j$th boundary of $N$ as a cobordism with corners, including
labelings of all the faces. The collars on the faces are obtained from
the collars of the corresponding faces of $M$ and $N$. The
$(i,+)$-boundary of $M \circ_j N$ is the union of the
$(i,+)$-boundaries of $M$ and $N$ for $i \ne j$ and just the
$(i,+)$-boundary of $N$ for $j=i$. Analogously, the $(i,-)$-boundary
of $M \circ_j N$ is the union of the $(i,-)$-boundaries of $M$ and $N$
for $i \ne j$ and just the $(i,-)$-boundary of $M$ for $j=i$.
\end{sloppypar}

Thus, a $k$-cobordism with corners for $k \le n$ (whose directions are
labeled by the choice of a coordinate $k$-cube $I^k \subset I^n$) will
be regarded as a $k$-morphism of the pseudo $n$-fold category of
cobordisms. More precisely, a $k$-cobordism with corners will be an
object of the category of $k$-morphisms, with morphisms given by
smooth maps, respecting the corner structure, i.e., mapping faces to
faces, preserving labelings and collars. We will also need to add
formal units with respect to composition, which will result in
treating $m$-cobordisms with corners for $m < k$ as identity
$k$-morphisms in specified complementary directions, as in the example
of cospans of topological spaces above.

The monoidal structure will be given by disjoint union of cobordisms
with corners.

\begin{proof}[Proof of Theorem $\ref{cob-thm}$]
  Abstract cobordisms with corners are nothing but higher cospans of
  sets with certain structure. Given that higher cospans of sets form
  a symmetric monoidal pseudo $n$-fold category, we just need to make
  sure that the structure glues under composition, i.e., the smooth
  structure is naturally defined on the composition of cobordisms with
  corners, as explained in the construction of cobordism composition
  above. This fact is guaranteed by the collars. Smooth maps between
  cobordisms with corners glue under compostion as well.
\end{proof}

\section{Comparison to Quasi-Categories}
\label{lurie}

Cobordisms with corners can also be handled by quasi-categories
\cite{joyal:quasi2002} (also known as $\infty$-categories
\cite{lurie:topoi} or weak Kan complexes
\cite{boardman-vogt:structures}). In our approach, the objects were
compact 0-dimensional manifolds, i.e., finite sets, the 1-morphisms
were represented by compact one-dimensional manifolds with boundary,
i.e., disjoint unions of intervals and circles, the 2-morphisms were
given by surfaces with corners, \dots, the $n$-morphisms were given by
$n$-dimensional cobordisms with corners. Remember that $k$-morphisms
in a pseudo $n$-fold category actually make up a usual category; what
we have just described are the objects in the categories of
$k$-morphisms, the morphisms are given by smooth maps, respecting the
cobordism structure (the corners, labels, and collars). A
quasi-category version of the category of $n$-cobordisms with corners
will have $(n-1)$-dimensional manifolds with corners as objects,
$n$-dimensional cobordisms with corners as 1-morphisms, pairs of
composable $n$-cobordisms with corners along with a choice for their
composition (a pushout) as 2-morphisms, and certain towers of pushouts
(all being $n$-dimensional manifolds with corners) of $k$ composable
cobordisms as $k$-morphisms for higher values of $k$. While this
inclusion of $n$-cobordisms with corners into the framework of
quasi-categories might be interesting on its own, it largely ignores
cobordisms of dimension other than $n-1$ and $n$ for a fixed number
$n$. To include other dimensions along these lines, one needs a
mixture of the $n$-fold (or $\infty$-fold) category approach with that
of quasi-categories, i.e., some kind of an $n$-fold quasi-category,
which is what we are going to introduce in this section.

We would like to outline the following simplicial approach to weak
$n$-fold categories, in the spirit of Boardman-Vogt's approach to weak
$n$-categories, see \cite{boardman-vogt:structures, joyal:quasi2002,
  lurie:topoi}. We would call them $n$-fold quasi-categories, where $0
\le n \le \infty$.  An \emph{$n$-fold quasi-category} is a $n$-fold
simplicial set $\mathbf{S}: (\mathbf{\Delta}^{\op})^n \to \Set$
satisfying the \emph{weak Kan} or \emph{inner horn-filling condition}:
for each sequence $\sigma_1, \dots, \sigma_n$, where each $\sigma_i$
is either a simplex $\Delta^{k_i}$, $k_i \ge 0$, or an inner horn
$\Lambda_j^{k_i} \subset \Delta^{k_i}$, $0 < j < k_i$, any inner
multihorn $(\sigma_1, \dots, \sigma_n) \to \mathbf{S}$ may be
completed to a multisimplex $(\Delta^{k_1}, \dots, \Delta^{k_n}) \to
\mathbf{S}$. (Compare this to the unique inner horn-filling condition
in characterizing the nerve of a strict $n$-fold category in
Theorem~\ref{nerve}.) Equivalently, one can iteratively define an
$n$-fold quasi-category as a weak Kan (or quasi-category) object in
the category of $(n-1)$-fold quasi-categories, starting with $0$-fold
quasi-categories, which are just sets by definition. Simply put, an
$n$-fold quasi-category is what it is, that is to say, an $n$-fold
iteration of the notion of a quasi-category. The notion of an $n$-fold
quasi-category might be regarded as a model of an
$(\infty,n)$-category, alternative to Barwick and Lurie's
\cite{lurie:tft} version of it as an $n$-fold (complete) Segal space.

An $n$-fold quasi-category $\Cob$ of $n$-cobordisms with corners may
be described by defining a multisimplex $(\Delta^{k_1}, \dots,
\Delta^{k_n}) \to \Cob$ as a multisimplicial diagram of composable
$n$-dimensional cobordisms with corners. Start withs a pasting diagram
in the form of an $n$-dimensional rectangular solid consisting of
strings of $k_1$ composable $n$-cobordisms in direction $1$, $k_2$
composable $n$-cobordisms in direction $2$, \dots, $k_n$ composable
$n$-cobordisms in direction $n$. Then add choices of all possible
pushouts, so that for each triangular prism $[p_1 q_1] \times \dots
\times [p_{i-1} q_{i-1}] \times [p_i q_i r_i] \times [p_{i+1} q_{i+1}]
\times \dots \times [p_n q_n]$, $p_i < q_i < r_i$, in the
multisimplex, the cobordism along the face $[p_1 q_1] \times \dots
\times [p_i r_i] \times \dots \times [p_n q_n]$ is a pushout of the
cobordisms along the faces $[p_1 q_1] \times \dots \times [p_i
q_i]\times \dots \times [p_n q_n]$ and $[p_1 q_1] \times \dots \times
[q_i r_i] \times \dots \times [p_n q_n]$ (not necessarily the
canonical way we used in Section~\ref{cobordisms}). Inner multihorns
will be similar towers of pushouts, possibly missing the pushouts
along an inner face of the $k_i$-simplex in the $i$th direction for
some values of $i$, $1 \le i \le n$. Since pushouts for $n$-cobordisms
with corners exist (referred to as compositions of cobordisms with
corners in Section~\ref{cobordisms}), all multihorns in triangular
prisms can always be filled. As concerns more general multihorns, we
can use canonical pushouts to add missing edges and coherence maps
$\alpha$ and $\beta$ to realize missing faces as pushouts. The
existence of such compositions is guaranteed by our Weak Coherence
theorem, Theorem~\ref{existence}.

\begin{rem}
  In the above we have assumed implicitly that all $k_1, \dots, k_n
  \ge 1$. If one of the $k_i$'s is equal to 0, we are talking about an
  $(n-1)$-cobordism with corners in all the directions except $i$. If
  more of the $k_i$'s are 0, it will be a cobordism of lower
  dimension, missing these directions.
\end{rem}
The above arguments prove the following result.

\begin{thm}
  $n$-Cobordisms with corners form an $n$-fold quasi-category.
\end{thm}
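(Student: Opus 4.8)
The plan is to take the construction of the $n$-fold simplicial set $\Cob$ sketched just above and verify the two things that are needed: that $\Cob$ is genuinely a functor $(\mathbf{\Delta}^{\op})^n \to \Set$, and that it satisfies the inner multihorn-filling condition. First I would make the construction precise. A multisimplex $(\Delta^{k_1}, \dots, \Delta^{k_n}) \to \Cob$ is a rectangular pasting scheme — a $(k_1 \times \dots \times k_n)$-grid of $n$-dimensional cobordisms with corners, composable in each of the $n$ coordinate directions — together with, for every sub-block of the grid, a chosen iterated pushout exhibiting the cobordism over that block as a composite of the cobordisms inside it (not necessarily the canonical pushout of Section~\ref{cobordisms}, but any cobordism equipped with the two seam inclusions making the relevant squares pushouts). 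The face map in direction $i$ either composes two adjacent slabs using the chosen pushout or forgets a boundary slab, while the degeneracy in direction $i$ inserts a slab of formal identity cobordisms. The simplicial identities in each direction hold by construction, and commutativity of structure maps in distinct directions is automatic since they act on disjoint combinatorial data of the grid; this yields the $n$-fold simplicial set.

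Next, the horn-filling, which I would prove by induction on $n$ using the inductive description of an $n$-fold quasi-category as a quasi-category object in $(n-1)$-fold quasi-categories; this reduces the problem to filling one direction at a time. An inner multihorn that is a full simplex in all directions but the $i$th, where it is $\Lambda^{k_i}_j$ with $0 < j < k_i$, is a tower of pushouts missing exactly the composites along inner faces of the $k_i$-simplex in direction $i$. One fills the two-dimensional inner horns sitting inside the triangular prisms by the mere existence of pushouts of $n$-cobordisms with corners — these are the compositions $M \circ_i N$ of Section~\ref{cobordisms}, whose smooth structure along the seam is provided by the collars — and then the higher inner horns in that direction are completed by the induced pushouts. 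This settles the case where the horn is ``one-dimensional'' in its defect.

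The main obstacle is the general inner multihorn, in which several $\sigma_i$ are proper horns simultaneously. Here I would first use canonical pushouts to fill the missing lower-dimensional faces (in particular the missing edges) of the multihorn, and then realize each missing higher face as a bona fide pushout by inserting the coherence isomorphisms $\alpha$ and $\beta$ comparing the canonically built composite with the composite dictated by the faces of the horn already present. That such coherence isomorphisms exist at all is exactly the Weak Coherence Theorem, Theorem~\ref{existence}: any two iterated composites of the same blocks are related by a natural isomorphism built from associators and interchangers. The technical heart is to see that these comparison maps fit together over the entire grid rather than merely pairwise, so that every requisite square in the completed multisimplex is genuinely a pushout and all the diagrams commute; this I would organize as a second induction, now on the number of directions in which the horn fails to be a full simplex, peeling off one such direction, applying the $(n-1)$-fold case, and gluing via Theorem~\ref{existence}. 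I expect this double bookkeeping — keeping the chosen pushouts and the $\alpha$'s and $\beta$'s mutually compatible across all directions of the grid — to be where the real work lies; the individual horn-fillings are each routine.
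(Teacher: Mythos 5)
Your proposal follows essentially the same route as the paper: multisimplices are pasting grids of composable cobordisms equipped with chosen (not necessarily canonical) pushouts, horns concentrated in one direction are filled by the existence of the compositions $M \circ_i N$ of Section~\ref{cobordisms} (collars providing the smooth structure along seams), and general inner multihorns are completed by taking canonical pushouts for the missing lower faces and invoking the associators and interchangers via the Weak Coherence Theorem~\ref{existence} to exhibit the remaining faces as pushouts. The extra inductive bookkeeping you describe only makes explicit what the paper leaves at the sketch level, so the argument is correct and matches the paper's proof.
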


Of course, one would also be interested in taking into account the
monoidal structure coming from disjoint union. This might be addressed
via $(n+1)$-fold (or higher) quasi-categories in a way similar to what
we did with monoidal pseudo $n$-fold categories above, but we would
rather not discuss this issue, as we will not be using
quasi-categories and Segal spaces in this paper. In the case of Segal
spaces, Lurie discusses the notion of symmetric monoidal
$(\infty,1)$-categories in \cite{lurie:dag3}. Another feature that we
would like to have in the context of $n$-fold quasi-categories is
including smooth maps and in particular diffeomorphisms in the
picture. It is not obvious how to do that, whereas smooth maps are
organic part of the treatment of cobordisms within the framework of
pseudo $n$-fold categories, developed in this paper.

\section{Topological Quantum Field Theories}
\label{TQFT}

\begin{metathm}
\label{meta}
  An \emph{$($extended$)$ $n$-dimensional Topological Quantum Field
    Theory $($TQFT$)$} is a symmetric monoidal functor from the
    symmetric monoidal pseudo $n$-fold category of cobordisms with
    corners to the opposite of the symmetric monoidal pseudo $n$-fold
    category of higher spans of sets (topological spaces, manifolds,
    orbifolds, or stacks) or to the symmetric monoidal pseudo $n$-fold
    category of higher cospans of dg associative algebras.
\end{metathm}

A functor to the opposite of a category may be called
\emph{contravariant}, as usual. Here we will be talking about TQFTs as
contravariant functors, having in mind that by taking appropriate
function algebras or de Rham or (singular) cochain algebras in
concrete situations, the contravariant functor with values in higher
spans of sets (or more interesting geometric objects) will induce a
(covariant) functor with values in higher cospans of (dg) algebras.

The rest of the section is a case study, meant to be a ``metaproof''
of the metatheorem. We will discuss gauge theory in greater detail and
then briefly go over Wess-Zumino-Witten (WZW) theory and a nonlinear
sigma model, including perhaps the most general AKSZ model, all of
which will repeat the same features. In principle, all the three basic
theories may be regarded as variations on the theme of sigma models: a
sigma model is based on maps $M \to X$ from cobordisms $M$ with
corners to a fixed target space $X$, whereas the WZW model is based on
maps $M \to G$ to a Lie group and gauge theory on maps $M \to BG$ to
the classifying space $BG$ of a Lie group. The fact that they all give
rise to TQFT functors in the sense of Metatheorem~\ref{meta} is,
roughly speaking, based on the fact that maps $M \to X$ restrict
naturally to faces of $M$, as well as to the components of $M$, if it
happens to be a composition of cobordisms or disjoint union. Our
formalism also allows us to consider mixed theories, for example,
gauge theory combined with a sigma model, based on maps $M \to X
\times BG$, or twisted ones, such as a theory based on pairs of maps
$M \to X \to BG$.

We would also like to make a few remarks about quantum versus
classical "path space" $\Phi(M)$. In our description, we will
construct the "quantum path space" $\Phi(M)$ first and then describe
examples of action functionals defined on the quantum path space. The
Euler-Lagrange equations for these functionals define "classical
trajectories," which form a "classical path space" $\Phi_{\cl} (M)
\subset \Phi (M)$. We name the TQFTs obtained from $\Phi_{\cl} (M)$
after the action functionals, e.g., the Yang-Mills functional gives
rise to Yang-Mills theory. In principle, to obtain TQFTs in a more
classical, operator formalism, one wishes to perform Feynman
integration over the whole path space $\Phi (M)$, using the action
functional as a density function. This integral localizes around the
classical path space $\Phi_{\cl} (M)$ via Feynman expansion. Usually,
a combination of fiberwise integration (e.g.., Faddeev-Popov or BV
quantization procedures) and further localization is performed to
reduce the path integral to a finite-dimensional one or and finally
obtain a single vector or operator between state spaces. However, this
procedure depends on the specifics of the setup: sometimes getting
such a vector is problematic because of divergences or anomalies,
sometimes one gets finite-dimensional spaces of such vectors instead
of uniquely defined vectors (cf.\ conformal blocks and modular
functors). The goal of our approach is to bring to surface a common
geometric background featured by all TQFTs irrespective of their
particulars.

\subsection{Gauge Theory}

Fix a compact Lie group $G$ and, for any $k$-cobordism $M$ with
corners, consider the set
\[
\Phi(M) = \{ (P, \nabla) \}/ \sim,
\]
where $P$ is a (smooth) principal $G$-bundle over $M$, $\nabla$ is a
connection on $P$, and $\sim$ denotes \emph{gauge equivalence}, given
by $G$-equivariant isomorphisms $P \stackrel{\sim}{\to} P'$ lifting
the identity isomorphism from the base $M$ and respecting the
connections on $P$ and $P'$.

The correspondence $M \mapsto \Phi(M)$ may be completed to a functor, a
TQFT, to be more precise. First of all, we need to introduce more
structure on $\Phi(M)$. Recall that connections on $P$ form an affine
space (i.e., a principal homogeneous space) over the vector space
$C^\infty (M, \Omega^1 (\gtg_P))$ of $\gtg_P = (P \times_{\Ad G}
\gtg)$-valued 1-forms, $\gtg$ being the Lie algebra of $G$ and $\Ad G$
the adjoint action of $G$ on $\gtg$. Using an inner product on $\gtg$,
we can introduce topology on the vector space and the affine space
over it and thereby the set
\[
\Phi_P(M) = \{ \text{connections  $\nabla$ on $P$}  \}/ G^M
\]
of connections on a fixed principal $G$-bundle $P$, up to the action
of the gauge group $G^M = C^\infty (M,G) = H^0(M, \GG)$, where $\GG :=
\CC^\infty (M,G)$ is the sheaf of (germs of) smooth functions $M \to
G$, and $C^\infty (M,G)$ is the space of global smooth functions $M
\to G$. The gauge group $G^M$ might also be understood as
\emph{non-abelian cohomology} $H^0(M, \GG)$ in degree zero. Finally,
redefine $\Phi(M)$ as an affine bundle over the space of gauge classes
of principal $G$-bundles $P$ with fiber $\Phi_P(M)$, i.e.,
\[
\Phi(M) = \coprod_{P \in H^1(M, \GG)} \Phi_P (M) = \coprod_{P \in [M, BG]}
\Phi_P (M),
\]
where $P$ in the first disjoint union runs over the set $H^1(M, \GG)$,
\emph{non-abelian cohomology} in degree one, which is, by definition,
the set of gauge classes of principal $G$-bundles over $M$, and
$[M,BG]$ in the second disjoint union is the set of homotopy classes
of maps from $M$ to the classifying space $BG$ of the Lie group
$G$. Moreover, instead of treating $\Phi(M)$ as a topological space,
one has to adopt the ``stacky'' point of view and regard $\Phi(M)$ as
a space with a group action, rather than an orbit space. In this case,
it will be an affine bundle over $H^1(M, \GG)$ with fibers being
affine spaces over the infinite dimensional vector space $C^\infty (M,
\Omega^1 (\gtg_M))$, endowed with, say, Fr\'echet topology, with an
action of a (generally, infinite dimensional) Lie group, $G^M$. This
suggests that one can view
$\Phi(M)$ as a step toward the Borel quotient
\[
H^1(M, \GG) / H^0(M, \GG).
\]

To extend the correspondence $M \mapsto \Phi(M)$ to a TQFT, we would
like to associate an $n$-span of stacks to a given $n$-cobordism $M$
with corners. The core (the object at the center) of the $n$-span will
be the stack $\Phi(M)$ of gauge classes of connections on principal
$G$-bundles. The object at the barycenter of each face of the $n$-span
will be the stack of gauge classes of connections on principal
$G$=bundles over the union of faces of the $n$-cobordism labeled by
the face of the $n$-cube $I^n$ corresponding to the face of the
$n$-span. The maps from the barycenter of the face toward the
barycenters of the bounding faces will be the restriction map: a
connection on a principal $G$-bundle over a cobordism with corners
induces a principal $G$-bundle with a connection on all the source and
target manifolds of the cobordism. This restriction map is manifestly
\emph{equivariant} with respect to the notion of gauge equivalence. We
will abuse the notation and let $\Phi(M)$ denote the resulting
$n$-span, rather than just its core. The correspondence $M \mapsto
\Phi(M)$ is in fact a functor, as it takes $k$-cobordisms with corners
to $k$-spans of sets, functorially with respect to smooth maps of
cobordisms and maps of spans, whereas compositions $M \circ_j N$ of
cobordisms induce natural coherence maps
\[
\Phi(M \circ_j N)  \to \Phi(M) \circ_j \Phi(N),
\]
where on the left-hand side, we have the set of (classes of)
connections over the composition $M \circ_j N$, whereas on the
right-hand side, we have the set of pairs of (classes of) connections
on $M$ and $N$, coinciding (up to gauge equivalence) on the common,
$j$th boundary of $M$ and $N$. The flip of the direction of the map as
compared to Section~\ref{functors} is due to the fact that $\Phi$ is
contravariant. It is easy to see that the coherence conditions of
Section~\ref{functors} are satisfied.

We could have imposed boundary conditions on principal $G$-bundles and
connections along the collars, so that they glue nicely when the
cobordisms are composed. The natural choice of \emph{boundary
  conditions} would be compatibility with collars at all faces of the
cobordism $M$: the pair $(P, \nabla)$ along a collar must be a trivial
extension of a principal $G$-bundle with connection from the face to
its collar. Gauge transformations would have to be constant along the
collars. Gluing along composable cobordisms would result in coherence
maps
\[
\Phi(M) \circ_j \Phi(N) \to \Phi(M \circ_j N),
\]
which would mean that the TQFT functor is a \emph{cofunctor}, rather
than a functor, by definition. However, we anticipate that since we
are talking about gauge classes of connections, irrespective of the
boundary conditions, the TQFT functor is actually a \emph{strong
  functor}, that is to say, all coherence maps are invertible.

We also need to see that the gauge theory functor $M \mapsto \Phi(M)$
is symmetric monoidal. Remember that now we have to think about
$k$-cobordisms with corners and $k$-spans of sets as $(k+3)$-morphisms
spanning $k$ positive, spatial, directions and the three nonpositive
ones: $-2, -1$, and $0$. Compositions in the nonpositive directions
are all given by disjoint union for cobordisms and Cartesian product
for spans. Thus, for $j \le 0$ the coherence maps
\[
\Phi(M \circ_j N)  \to \Phi(M) \circ_j \Phi(N),
\]
will represent (the gauge class of) a principle $G$-bundle with a
connection on $M \coprod N$ as a pair $((P_M, \nabla_M), (P_N,
\nabla_N))$ of the same on $M$ and $N$ separately. Again, the
coherence conditions of Section~\ref{functors} involving nonpositive
directions is easy to check.

There exist different reincarnations of gauge theory, whose
description within our formalism is given below.

\subsubsection{The Dijkgraaf-Witten Toy Model}

This is a version of gauge theory in the case when the group $G$ is
finite. We will indicate the main ingredients of the construction,
leaving out those details which are similar to those in the case of
the general treatment of gauge theory above.

The functor $M \mapsto \Phi(M)$ assigns to a cobordism $M$ with
corners the set
\[
\Phi(M) = \Hom (\Pi_1 (M), \G)/ \sim
\]
of morphisms from the fundamental groupoid $\Pi_1(M)$ of $M$ to the
groupoid $\G$ of principal homogeneous spaces for group $G$, up to
natural isomorphism. The objects of the fundamental groupoid
$\Pi_1(M)$ are points of $M$, and the morphisms are homotopy classes
of paths in $M$. The objects of $\G$ are \emph{principal
  homogeneous spaces$,$ or torsors}, i.e., sets with free, transitive
(left) action of $G$. The morphisms of $\G$ are $G$-equivariant
maps. Again, we should think about $\Phi(M)$ as a stack, rather than a
set: it is the groupoid of functors $\Pi_1(M) \to \G$ with natural
isomorphisms as morphisms. One can represent this stack as a global
quotient by the "conjugation" action of the group $G$ via
automorphisms of the groupoid $\G$: for an element $g \in G$ and a
$G$-torsor $X$, a new torsor $gX$ is defined as the set $\{ gx \; | \;
x \in X\}$ along with $G$-action, as follows:
\[
h (gx) := (hg)x = g(g^{-1}hg) x \qquad \text{for $h \in G$ and $x \in X$.}
\]
The isotropy subgroup of a point will then consist of those elements
of $G$ which commute with the monodromy group.

As in the general case of gauge theories above, the $G$-space
$\Phi(M)$ should be extended to a higher span of $G$-spaces, with the
span structure induced by restriction of morphisms $\Pi_1 (M) \to \G$
to faces of $M$.

In this case, it is easy to see that the coherence maps
\[
\Phi(M \circ_j N)  \to \Phi(M) \circ_j \Phi(N),
\]
will be $G$-equivariant isomorphisms: a pair of morphisms $\Pi_1 (M) \to
\G$, $\Pi_1 (N) \to \G$, equal on the common boundary of $M$
and $N$, extend uniquely to a morphism $\Pi_1 (M \circ_j N) \to
\G$ by Seifert-van~Kampen argument.

In a similar way, the coherence maps for $\circ_j$ compositions for $j \le 0$, i.e., the natural maps
\[
\Phi(M \coprod N)  \to \Phi(M) \times \Phi(N)
\]
are also isomorphisms of spans.

\begin{rem}
  A key ingredient of Dijkgraaf-Witten's toy model is the choice of a
  class $\alpha$ in the cohomology $H^3 (BG, U(1))$ of the classifying
  space $BG$ under the assumption that $\dim M = 3$. This class may be
  used to obtain a "partition function," also known as the
  Dijkgraaf-Witten invariant
  \[
  Z(M) := \sum_{\gamma \in \Phi(M)} \gamma^* (\alpha) ([M]) \in U(1),
  \]
  where we use the additive notation for $U(1) = \R/ \Z$ and
  understand summation in the orbifold sense, i.e.,
\[
\sum_{\gamma \in \Phi(M)} \gamma^* (\alpha) ([M]) := \frac{1}{\abs{G}}
\sum_{\gamma \in \Hom (\pi_1(M), G)} \gamma^* (\alpha) ([M]),
\]
representing $\Phi(M)$ as a global quotient $\Hom (\pi_1(M), G) /G$ by
the conjugation action of $G$. More generally, if $\del_2 M = \del_3 M
= \varnothing$, then one can define a TQFT in a more traditional
sense. This includes metrized complex lines $L(\del_1^- M)$ and
$L(\del_1^+ M)$ corresponding to the incoming and outgoing boundary,
respectively, and a "tunneling amplitude," a linear map
\[
\Phi_M: L(\del_1^- M) \to L(\del_1^+ M),
\]
see \cite{dijkgraaf-witten,freed-quinn,fhlt}. The above summation
formula is a simple case of path integration, which is needed to
obtain a number, such as $Z(M)$, or a map, such as $\Phi_M$, from the
span $\Phi(M)$.
\end{rem}

\subsubsection{Yang-Mills Theory}

In Yang-Mills theory, one starts with the \emph{Yang-Mills action
  $($functional$)$}:
\[
\int_M \Tr (F \wedge *F),
\]
defined on the space $\{ (P, \nabla) \}/ \sim$ of equivalence classes
of principal $G$-bundles $P$ with connection $\nabla$ for a compact
Lie group $G$. To define this functional, we need to consider
cobordisms $M$ with corners and a Riemannian metric which makes all
faces orthogonal to each other at intersections, so that the Hodge
dual $*F$ of the curvature form $F$ of the connection $\nabla$ is
defined, whereas $\Tr$ denotes the Killing form on the Lie algebra
$\gtg$. The Euler-Lagrange equations
\[
d_\nabla F = d_\nabla *F = 0,
\]
for the Yang-Mills functional are manifestly gauge invariant and
called the \emph{Yang-Mills equations}. Then the functor
$\Phi_{\YM}(M)$ defining Yang-Mills theory as a TQFT consists of all
solutions to the Euler-Lagrange equations, i.e.,
\[
\Phi_{\YM}(M) = \{ (P, \nabla) \; | \; d_\nabla *F = 0 \}/ \sim.
\]
The first Yang-Mills equation $d_\nabla F = 0$ is the Bianchi
identity, which is satisfied automatically, and thereby redundant. As
above, it is easy to check that we get a TQFT in the sense of this
paper.

\subsubsection{Yang-Mills Theory in Lower Dimensions}

\emph{Donaldson theory} is a version of Yang-Mills theory in the case
of $\dim M =4$. The advantage of four dimensions is that in this case
both $F$ and $*F$ are 2-forms and moreover the Yang-Mills functional
minimizes on \emph{self-dual} and \emph{anti-self-dual connections},
also known as \emph{instantons} --- those whose curvature $F$
satisfies
\[
*F = \pm F.
\]
Thereby this equation implies the Yang-Mills equations. In particular,
Donaldson considered anti-self-dual connections. Thus, one can define
Donaldson theory as a 4d TQFT (i.e., one defined on the pseudo
four-fold category of cobordisms of dimension $\le 4$) by specifying
\[
\Phi_{\D}(M) = \{ (P, \nabla) \; | \;  *F = -F \}/ \sim
\]
for $\dim M = 4$ and using the standard Yang-Mills equation $d_\nabla
* F = 0$ for $\dim M \le 3$.  When $\dim M = 3$, Yang-Mills theory is
known to turn into \emph{Chern-Simons theory} with the corresponding
TQFT functor (on the pseudo three-fold category of cobordisms of
dimension $\le 3$) defined as follows:
\[
\Phi_{\CS}(M) = \{ (P, \nabla) \; | \;  F = 0 \}/ \sim.
\]
\emph{Seiberg-Witten theory} is a version of Yang-Mills theory in four
dimensions, alternative to Donaldson theory. Seiberg-Witten theory is
described by certain gauge fields, called \emph{monopoles}, on
4-manifolds with complex spin structure. Restriction of monopoles to
the boundary also generates a 4d TQFT in the sense of the present
work.

\subsection{Wess-Zumino-Witten Theory}

\begin{sloppypar}
In \emph{Wess-Zumino-Witten $($WZW$)$ theory} one starts with a
compact Lie group $G$ and assigns to an $n$-cobordism $M$ the mapping
space
\[
\Phi(M) := \{ \text{smooth maps } M \to G \}/\sim,
\]
where two maps $f$ and $g$ are equivalent, if they fit into a
commutative diagram
\[
\xymatrix{
M \ar[d]_\phi \ar[r]^f & G,\\
M \ar[ru]_g
}
\]
where $\phi$ is a diffeomorphism respecting the structure of a
cobordism with corners. Out of a given $n$-cobordism, we are actually
getting an $n$-span $\Phi(M)$ of stacks, where the maps making up the
span come from restriction of maps to $G$ from faces of $M$ to faces
of lower dimension. The coherence maps
\begin{eqnarray}
\label{coherence1}
\Phi(M \circ_j N) & \to & \Phi(M) \circ_j \Phi(N),\\
\label{coherence2}
\Phi(M \coprod N)  & \to & \Phi(M) \times \Phi(N)
\end{eqnarray}
are given by restriction from the composition (or union) of two
cobordisms to the individual cobordisms.
\end{sloppypar}

WZW theory for a complex semi-simple Lie group $G$ and 2-cobordisms
$M$ provided with complex structure is defined by an action whose
Euler-Lagrange equations require that solution fields $f: M \to G$ be
holomorphic. Thus, under these assumptions, we can consider only
classical trajectories in our path space and redefine the TQFT functor
on the pseudo 2-fold category of 2-cobordisms with complex structure
as follows:
\[
\Phi_{\cl} (M) := \{ \text{holomorphic maps } M \to G \}/\sim,
\]
where two maps are equivalent, if they are related by a complex
isomorphism of $M$ preserving the cobordism with corners structure.

\subsection{Sigma Model}

In a \emph{$($nonlinear$)$ sigma model}, one fixes a \emph{target
  space} $X$, usually a smooth compact manifold, and defines a TQFT,
using smooth maps to $X$:
\[
\Phi(M) := \{ \text{smooth maps } M \to X \}/\sim,
\]
with the same equivalence relation as in the WZW model above: two maps
$f$ and $g$ are equivalent, if they fit into a commutative diagram
\[
\xymatrix{
M \ar[d]_\phi \ar[r]^f & X,\\
M \ar[ru]_g
}
\]
for a diffeomorphism $\phi$. As above, restriction to faces defines a
span $\Phi(M)$ of stacks and restriction to components defines
coherence maps \eqref{coherence1}--\eqref{coherence2}.

If we consider 2d cobordisms with complex structure and a complex
target space $X$, we will be getting what is known as
\emph{$($open-closed$)$ Gromov-Witten theory}, defined by a
``classical path space''
\[
\Phi_{\GW} (M) := \{ \text{holomorphic maps } M \to X \}/\sim,
\]
the equivalence relation given by complex isomorphisms preserving the
structure of cobordism with corners.

Another useful theory, called the \emph{Rozansky-Witten model}
\cite{rozansky-witten} comes in 3 dimensions ($\dim M = 3$). In the
Rozansky-Witten model, one considers oriented 3d cobordims $M$ with
corners, a complex symplectic manifold $X$ as target space, maps $f: M
\to X$, and sections $\eta: M \to f^* T^{0,1} X$ and $\rho: M \to f^*
T^{1,0} X \otimes T^* M$, see more details in \cite{kapustin}. To
accommodate this theory, one can modify the TQFT functor to include
the whole quantum path space $\Phi (M) := \{ (f, \eta, \rho) \}/\sim$
or restrict the attention to classical paths: $\Phi_{\RW} (M) := \{
(f, \eta, \rho) \}/\sim$, i.e., those satisfying the Euler-Lagrange
equations of the Rozansky-Witten action, see
\cite{kapustin,rozansky-witten} or a more general (as per
\cite{qiu-zabzine}) AKSZ formalism below.

The \emph{AKSZ model} \cite{aksz,roytenberg-aksz} uses maps from
$(T[1]M, d_{\DR})$ to $(X,Q)$, where $(T[1]M, d_{\DR})$ is the dg
manifold whose sheaf of functions is the de Rham algebra
$\Omega_M^\bullet : = S^\bullet \TT^*_M [1]$, where $\TT^*_M [1]$ is
the cotangent sheaf of $M$ shifted by degree 1 (also known as the
desuspended cotangent sheaf). The dg structure is given by the de Rham
differential $d_{\DR}$. Also in the above, $(X,Q)$ is a \emph{dg
  symplectic manifold} \cite{roytenberg-aksz}, a dg manifold $(X, Q)$
with a graded skew form $\omega: T_X \otimes T_X \to \R[n-1]$ of
degree $n -1$, where $n = \dim M$, and a Hamiltonian differential $Q$,
i.e., one given by a function $\Theta$ on $X$ of degree $n$ via $Q =
\{\Theta, -\}$, satisfying the "classical master equation" $Q^2 = 0$
or, equivalently, $\{\Theta, \Theta\} = 0$, where the bracket is the
Poisson bracket associated to $\omega$. The classical path space
$\Phi_{\AKSZ} (M)$ will then be defined as the space of dg maps
\[
\Phi_{\AKSZ} (M) := \Map ((T[1]M,d_{\DR}), (X,Q))/\sim
\]
up to the same equivalence relation as in the usual sigma model above.
Since the degree of the symplectic form $\omega$ depends on the
dimension $n$ of the source $M$ and in a TQFT the dimension of $M$
varies, it is not obvious how to generate a TQFT out of the AKSZ
model. We can only speculate that such a TQFT may be given by fixing
the target space to be a graded manifold $X$ with a collection of
symplectic forms $\{\omega_i: T_X \otimes T_X \to \R[i-1] \; | \; i =
0, 1, \dots, n\}$ and differentials $Q_0, Q_1, \dots Q_n$, such that
$Q_i = \{\Theta_i, -\}_i$, where $\{-,-\}_i$ is the Poisson bracket
associated with $\omega_i$ and $\Theta_i$ is a function on $X$ of
degree $i$, satisfying the classical master equation $\{\Theta_i,
\Theta_i\} = 0$. Then for a $n$-cobordism $M$ with corners, the TQFT
functor in the AKSZ model might be defined as the space of dg maps
$\phi: (T[1]M,d_{\DR}) \to (X,Q_n)$ such that restriction of $\phi$ to
a face $F^i$ of $M$ of dimension $i$ induces a dg map
$(T[1]F^i,d_{\DR}) \to (X,Q_i)$. This way, the structure of a span
would be tautological and we would get a TQFT functor, indeed.

\subsection{Variations}

One could consider various modifications of TQFT theory, whose
physical sense may be quite different. For example, one could take
oriented cobordisms with corners. In some cases, such as Yang-Mills
theory, we needed to bring in Riemannian metrics on
cobordisms. Riemannian cobordisms are also needed for Stolz and
Teichner's construction \cite{stolz-teichner} of Segal's elliptic
object. For conformal field theories, one could talk about complex
cobordisms \cite{milnor,ravenel:cc}. It would also make sense to
consider symplectic/contact cobordisms, such as those in
\cite{ginzburg:fa, ginzburg:top}. One can also include D-branes into
the picture by labeling different connected components of the boundary
(or even faces) of cobordisms in selected directions by elements of a
fixed set of ``D-branes.'' For example, in the case of 2-cobordisms
with corners, one can interpret the boundary $\del_1 M$ in direction 1
as closed and open strings, depending on whether the component of the
boundary is closed (circle) or not (interval). The boundary $\del_2 M$
in direction 2 may be regarded as ``free'' boundary and each component
of $\del_2 M$ may be labeled by a D-brane.

Further extension of the notion might be in the spirit of Topological
Conformal Field Theories (TCFTs),
\cite{segal,getzler,me:top,costello:tcfts}, when the source pseudo
$n$-fold category of cobordisms with corners is replaced with a pseudo
$n$-fold category in which a $k$-morphism is a (e.g., singular) chain
of $k$-cobordisms with corners and the target category also has a dg
structure, such as the pseudo $n$-fold category of spans of dg
coalgebras, or cospans of dg algebras (or derived versions
thereof). On top of that, 2d TCFTs cobordisms must also be provided
with extra structure, such as that of a complex manifold. However, we
prefer not to overload this paper with further generalizations in this
direction.

\section{Appendix: the Regular Coherence Theorem}

\begin{thm}[Regular Coherence Theorem]
\label{coherence}
Let $c$ be a \emph{regular} gluing diagram for $k$-morphisms in pseudo
$n$-fold category, that is to say, a diagram made by cutting a
$k$-dimensional cube up by hyperplanes into $p = p_1 \dots p_k$
rectangular blocks of equal size. Let $(k\Mor)^p_c$ denote the full
subcategory of $k\Mor^p$ of morphisms composable in the diagram
$c$. Then any two functors $(k\Mor)^p_c \to k\Mor$ built from morphism
composition are related by a \emph{unique} natural isomorphism built
from associators $\alpha$ and interchangers $\beta$.
\end{thm}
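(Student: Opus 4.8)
The plan is to upgrade the existence statement of Theorem~\ref{existence} to a uniqueness statement by induction on the number $p = p_1 \cdots p_k$ of blocks, exploiting the \emph{regularity} of the gluing diagram $c$. Recall that Theorem~\ref{existence} already produces, for any two composition functors $u, v \colon (k\Mor)^p_c \to k\Mor$, a natural isomorphism $\theta \colon u \to v$ built from $\alpha$'s and $\beta$'s; what remains is to show that any two such isomorphisms $\theta, \theta' \colon u \to v$ coincide, equivalently (setting $v = u$ and composing) that any endo-isomorphism $\gamma \colon u \to u$ built from $\alpha$'s and $\beta$'s is the identity. So the real task is to establish this \emph{triviality of autoequivalences} for regular diagrams. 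First I would set up the combinatorial bookkeeping: a composition functor on a regular block diagram is encoded by a parenthesized word recording, at each stage, which pair of adjacent (composable) sub-blocks is glued and in which direction; the elementary generating moves relating two such words are exactly the instances of $\alpha_i$ (reassociating a same-direction triple), $\beta_{ij}$ (interchanging a $2\times 2$ square), and their inverses, and $\gamma$ is a composite of these along a loop in the graph whose vertices are parenthesizations.

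The key reduction is to ``peel off'' the last (outermost) composition. Because the diagram is a regular grid of equal blocks, the outermost composition in a composition functor $u$ is a cut of the $k$-cube by a single hyperplane $x_i = m$ for some direction $i$ and some integer $m$; this splits the grid into two smaller regular grids $c'$ and $c''$ with $u = u' \circ_i u''$. I would argue that along any loop $\gamma$ in the parenthesization graph, one may normalize so that the outermost cut is held fixed throughout — moving the outermost cut from one hyperplane to another, or changing its direction, costs a sequence of $\alpha$'s and $\beta$'s whose net effect on the loop can be pushed inside by naturality, reducing to the case where $\gamma = \gamma' \circ_i \gamma''$ with $\gamma'$, $\gamma''$ autoequivalences of the strictly smaller regular diagrams $c'$, $c''$. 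The induction hypothesis then gives $\gamma' = \mathrm{id}$, $\gamma'' = \mathrm{id}$, hence $\gamma = \mathrm{id}$. The base case $p = 1$ is trivial, and the cases $p_1 \cdots p_k = 2$ (a single $\alpha$- or $\beta$-free composition) and $3$ or $2\times 2$ (the pentagon and the two hexagons, respectively) are exactly the coherence axioms, which dispose of the minimal loops.

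The main obstacle — and the heart of the argument — is precisely the claim that \emph{every} loop in the parenthesization graph is generated, modulo naturality, by the pentagon, the two hexagons, and the naturality squares of $\alpha$ and $\beta$; that is, that these faces generate $\pi_1$ of the relevant 2-complex. This is the analogue of Mac Lane's argument for monoidal categories and Joyal--Street's for braided ones, and the regularity hypothesis is what makes it tractable: in an arbitrary (non-regular) gluing diagram the cut hyperplanes need not be parallel or global, and the space of parenthesizations is far more complex, which is why the theorem is stated only for regular $c$. I would handle this by an Eilenberg--Zilber-type / shellability argument on the poset of partial gluings: order the generating moves so that the ``outermost-cut-fixing'' loops form a spanning subcomplex, show any loop is homotopic into it via pentagon/hexagon/naturality fillings, and then invoke the inductive hypothesis on the two halves. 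The second hexagon axiom is exactly what is needed to fill the loops that permute three distinct interchange directions simultaneously — without it the complex would have an unfilled 2-cell and autoequivalences could be nontrivial; with it, the argument closes.
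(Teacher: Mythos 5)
There is a genuine gap, and you have in fact located it yourself: the claim that every loop in the parenthesization graph is generated, modulo naturality, by the pentagon and the two hexagons --- equivalently, your normalization step asserting that the portion of a loop which moves or reorients the outermost cut ``can be pushed inside by naturality'' --- is not an auxiliary lemma but essentially the entire content of the theorem, and the ``Eilenberg--Zilber-type / shellability argument'' is only named, not carried out. Naturality alone cannot absorb those moves: changing the direction or position of the outermost cut is effected by interchangers and associators whose interaction with moves acting inside the two halves is governed precisely by the pentagon and both hexagon axioms, and whether the relevant overlaps close up at all depends on the regularity hypothesis in a concrete combinatorial way. For instance, when the outermost move is $\alpha_i:(a_1\circ_i a_2)\circ_i b\to a_1\circ_i(a_2\circ_i b)$ and the inner move is an interchanger $\beta_{ji}$ acting on $a_1\circ_i a_2$, closing the resulting square requires factoring $b$ compatibly in direction $j$; for a non-regular diagram $b$ need not factor that way, and even in the regular case one must first bring the factors into a normal form by \emph{forward} moves before the hexagon can be applied --- this is exactly the step your sketch skips. (A smaller slip: your ``minimal loops'' are misidentified --- the pentagon lives on four blocks in a line and the hexagons on $2\times 3$ and $2\times 2\times 2$ grids, not on ``$3$ or $2\times 2$''.)

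For comparison, the paper does not work with loops and $\pi_1$ of a 2-complex at all. It proves an \emph{Enhanced Diamond Lemma} (Newman's lemma for a directed graph equipped with an equivalence relation on paths), whose conclusion is both unique normal forms and equivalence of \emph{all} paths between two vertices. The graph has composition functors as vertices and the directed edges are the forward moves $\alpha_i$ and $\beta_{ji}$ with $i<j$; termination follows because each $\alpha$ pushes parentheses rightward and each $\beta_{ji}$ pushes higher-index compositions outward, and the enhanced diamond condition is verified by induction on $p$, splitting the outermost composition as $a\circ_i b$, with the hard overlapping cases resolved by putting $a_{11},a_{12},a_{21},a_{22},b$ into normal form (compose direction by direction with right-leaning parentheses), using regularity to split off a row, and filling the resulting diagrams with the pentagon, both hexagons, and naturality squares. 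Your outline shares the peeling-off-the-outermost-cut idea and the appeal to regularity, but without an analogue of this confluence verification (or some other completed argument that the coherence cells fill all loops), the proof is not there.
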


\begin{proof}
  The proof will be based on the following enhancement of Newman's
  Diamond lemma in graph theory. This enhancement seems to be new and
  interesting on its own. Let us start with the classical Diamond
  lemma.

  First of all, recall some terminology related to directed graphs. A
  \emph{directed graph} is a set of vertices and a set of edges
  endowed with a sense of direction, along with an incidence relation
  between the endpoints of the edges and the vertices, so that each
  endpoint of an edge is incident to a vertex. We will consider paths
  between vertices in a graph. A \emph{path} from $x$ to $y$ is given
  by a finite sequence of consecutive undirected edges from $x$ to
  $y$. If all the edges along a path are directed from $x$ to $y$, the
  path is called \emph{directed}. A \emph{descending path} starting at
  a vertex $x$ is an at most countable sequence of consecutive
  directed edges starting at $x$. A path is \emph{trivial} when it
  comprises no edges. A \emph{terminal vertex} is one with no edges
  originating from it.

\begin{lem}[Diamond Lemma]
  Suppose that we are given a directed graph with a \emph{descending
    chain condition} --- namely, that any directed path from a vertex
  has finite length, --- and a \emph{diamond condition} --- namely,
  any two directed edges starting at a vertex may be extended to
  directed paths that end up at the same point. Then every connected
  graph component has a unique terminal vertex.
\end{lem}

For the enhanced version of the lemma, we will need to consider graphs
with an equivalence relation on the set $\Path (x,y)$ of (undirected)
paths from $x$ to $y$ for each pair of vertices $x$ and $y$. We will
assume that this relation $\sim$ satisfies the following conditions:
\begin{description}
\item[trivial cycles] For any path $f$, we have $f^{-1} f \sim \id$,
  where $\id$ is the trivial path at the starting vertex of $f$;
\item[invariance] Two paths from $x$ to $y$ are equivalent, if and
  only if their extensions by an edge adjacent to $x$ (or $y$) are
  equivalent.
\end{description}

\begin{lem}[Enhanced Diamond Lemma]
\label{edl}
Suppose that we have a directed graph with an equivalence relation on
paths, as above. Suppose also that the graph satisfies the descending
chain condition and the following enhanced diamond condition: any two
directed edges originating at a vertex may be extended to
\emph{equivalent} directed paths ending at one and the same
vertex. Then every connected component of the graph has a unique
terminal vertex, and moreover any two paths between any two vertices
are equivalent.
\end{lem}

\begin{proof}[Proof of Lemma]
  We will prove the Enhanced Diamond lemma only, as the proof of
  Newman's classical Diamond lemma is standard and moreover follows
  from our proof of the enhanced version. The proof will follow the
  same sequence of steps as the proof of the classical Diamond lemma,
  see \cite{huet}, based on well-founded (also known as Noetherian)
  induction.
\smallskip

\noindent
\emph{Step 1: Local diamond condition implies global}. First of all,
let us show that under our assumptions, the following ``global''
version of the diamond condition takes place: any two finite directed
\emph{paths} starting at a vertex may be extended to equivalent
directed paths that end up at one and the same vertex.

The proof follows from the \emph{well-founded induction} principle,
which states that if we have a property $P$ of vertices in a directed
graph with a descending chain condition, so that for each vertex $t$,
property$P(t)$ holds whenever it holds for all ends of nontrivial
finite directed paths emanating from $t$, then $P$ holds for all
vertices of the graph.

In our case, the property $P(t)$ is the above global diamond condition
for directed paths originating from a given vertex $t$. The necessary
condition, that is, if $P(t)$ is true for a vertex $t$, then it is
true for all vertices underneath, follows from the invariance
property. To check the sufficient condition, we assume that we have
two directed paths $t \stackrel{*}{\to} u$ and $t \stackrel{*}{\to}
v$, where the asterisk distinguishes a path from an edge, and that the
property $P$ holds for all vertices strictly under $t$; then we have
to find a vertex $w$ with directed paths $u \stackrel{*}{\to} w$ and
$v \stackrel{*}{\to} w$ such that the two composite directed paths
from $t$ to $w$ are equivalent. The cases $t = u$ and $t = v$ are
trivially resolved by setting $w=v$ and $w=u$, respectively.

Otherwise, we may assume the paths from $t$ to $u$ and $v$ to be
nontrivial and thereby write them as $t \to u_1 \stackrel{*}{\to} u$
and $t \to v_1 \stackrel{*}{\to} v$. By the (local) diamond condition,
there exists $w_1$ with directed paths $u_1 \stackrel{*}{\to} w_1$ and
$v_1 \stackrel{*}{\to} w_1$, so that the two composite paths from $t$
to $w_1$ are equivalent. Using the induction hypothesis on $u_1$,
there exists $w_2$ with directed paths $u \stackrel{*}{\to} w_2$ and
$w_1 \stackrel{*}{\to} w_2$, so that the two composite paths from
$u_1$ to $w_2$ are equivalent.

Now we have a composite path $v_1 \stackrel{*}{\to} w_1
\stackrel{*}{\to} w_2$ and the path $v_1 \stackrel{*}{\to} v$,
starting from $v_1$, to which we apply the induction hypothesis again
and get $w$ with paths $w_2 \stackrel{*}{\to} w$ and $v
\stackrel{*}{\to} w$, with the two composite paths from $v_1$ to $w$
being equivalent. In the process, we have obtained three diamonds,
each with equivalent paths from the top to the bottom:
\[
\xymatrix{
& & t \ar[ld] \ar[rd]\\
& u_1 \ar[ld]_{*} \ar@{-->}[rd]^{*} & & v_1 \ar@{-->}[ld]^{*} \ar[rd]^{*}\\
u \ar@{-->}[rd]_{*} & & w_1 \ar@{-->}[ld]^{*} & & v \ar@{-->}[lldd]^{*}\\
& w_2 \ar@{-->}[rd]_{*}\\
& & w}.
\]
The big, outer diamond is the one we have been looking for, and the
paths down along its edges are equivalent, because the smaller
diamonds are bounded by equivalent paths.
\smallskip

\noindent
\emph{Step 2: The uniqueness of a normal form}. This means that for
any vertex $x$, there exists a unique terminal vertex $y$ with a
directed path from $x$ to $y$, unique up to equivalence. This $y$ is
called the \emph{normal form of} $x$. The existence is obvious from
the descending chain condition. The uniqueness of a normal form and a
directed path to it, up to equivalence, follows trivially from the
enhanced global diamond property.
\smallskip

\noindent
\emph{Step 3: If two vertices are connected by a path, then their
  normal forms are the same and the path between the two vertices is
  unique up to equivalence}. Note that the two vertices may be the
same, in which case the uniqueness of a path up to equivalence is
still a nontrivial statement. It implies that any closed path is
equivalent to the trivial path.

The exact statement we are going to prove will be more constructive:
\emph{if two vertices $x$ and $y$ are connected by a path, then they
  have one and the same normal form $z$ and the path is equivalent to
  a path $x \stackrel{*}{\to} z \stackrel{*}{\leftarrow} y$}.

This statement may be proven using usual induction on the length of
the path. If the path is trivial, the statement follows from Step 2
and the trivial cycles property. Suppose the statement is true for all
paths of length at most $n \ge 0$. If we have a path from $x$ to $y$
consisting of $n+1$ edges, consider the vertex $x'$ one edge away from
$x$ in the direction of $y$ along the path. By induction, $x'$ and
$y$, connected by a path of length $n$, have the same normal form $z$
and the path from $x'$ to $y$ is equivalent to $x' \stackrel{*}{\to} z
\stackrel{*}{\leftarrow} y$. Whichever way the edge between the
vertices $x$ and $x'$ is directed, a normal form of one of them will
automatically be a normal form of the other. For instance, if the edge
is directed from $x'$ to $x$, a directed path from to $x$ to a normal
form extended by the edge from $x'$ to $x$ will serve as a directed
path from $x'$ to a normal form thereof. Because of the uniqueness of
a normal form, it will be the vertex $z$. Because of the uniqueness of
a path to a normal form, the path $x' \xrightarrow{*} z$ will be
equivalent to the path $x' \to x \xrightarrow{*} z$.

Note that the path $x \leftarrow x'$ is equivalent to $x
\stackrel{*}{\to} z \stackrel{*}{\leftarrow} x \leftarrow x'$. Thus,
we see that the path from $x$ to $y$ is equivalent to the path $x
\stackrel{*}{\to} z \stackrel{*}{\leftarrow} x' \stackrel{*}{\to} z
\stackrel{*}{\leftarrow} y$, in which the two middle arrows may be
``canceled,'' so that it becomes equivalent to the path $x
\stackrel{*}{\to} z \stackrel{*}{\leftarrow} y$. The case when the
edge between $x$ and $x'$ is directed from $x$ to $x'$ is treated
similarly.
\end{proof}

Now we are ready to prove the Regular Coherence theorem. The existence
statement is a particular case of the Weak Coherence theorem,
Theorem~\ref{existence}, and all we need is to prove uniqueness.

We would like to apply the Enhanced Diamond Lemma, in which the graph
is formed by functors $(k\Mor)^p_c \to k\Mor$ built from morphism
composition as vertices and applicable natural transformations
$\alpha_i$ and $\beta_{ji}$ with $i < j$ (combined with identities in
all the remaining variables) as directed edges. We say that two paths
in this graph are equivalent, if the resulting compositions of natural
transformations are equal. This is an equivalence relation satisfying
the trivial cycles and invariance conditions, see the discussion
before Lemma~\ref{edl}.

Let us check that this graph satisfies the descending chain
condition. Note that each associator $\alpha$ moves a pair of
parentheses to the right. Thus, in a descending path of $\alpha$s and
$\beta$s, there should be a finite number of $\alpha$s. Note that each
interchanger $\beta_{ji}$ with $i < j$ leaves the parentheses intact,
but moves morphism composition $\circ_j$ with greater index $j$ from
the inside of a pair of parentheses to the outside. Even though some
of the $\alpha$s participating in a path may move compositions
$\circ_j$ with greater indices $j$ deeper inside parentheses, there
are only finitely many $\alpha$s and thereby there should be only
finitely many $\beta$s.

Thus, to show the uniqueness in the Regular Coherence theorem, we just
need to verify the enhanced diamond condition. This is far from being
obvious and is the core argument in the proof of the Regular Coherence
theorem.

The enhanced diamond condition starts with two directed edges
emanating from a vertex. This translates into two natural
transformations $\gamma$ and $delta$, each of the type $\alpha_i$ or
$\beta_{ji}$ with $i < j$, applied to a composition of some number $p$
of $k$-morphisms, or strictly speaking, a functor $(k\Mor)^p_c \to
k\Mor$ built out of morphism compositions. We need to show that these
transformations may be appended by transformations of same kind, so
that the resulting sequence of transformations will end at one and the
same composition functor $(k\Mor)^p_c \to k\Mor$ and the corresponding
paths will be equivalent.

To check the enhanced diamond condition, we will use induction on the
number $p$ of terms in our functors $(k\Mor)^p_c \to k\Mor$, starting
with $p=2$, when the gluing diagram $c$ consists of two blocks
attached in a particular direction $i$, $1 \le i \le n$, and the only
composition functor applied to two morphisms fitting into this
diagram: $\circ_i$. In this case, the graph consists of just a single
vertex, and the enhanced diamond condition is vacuously true. Suppose
we have proven the statement for any number less than $p$ of terms in
our functors. We need to check if the statement is true for $p$ terms.

Note that the induction assumption implies the existence of a normal
form for any regular composition of less than $p$ morphisms. It is
easy to describe what this normal form has to be. Without loss of
generality, let us assume that morphisms in the composition are
composed in directions $1, \dots, d$, for some $d$: $1 \le d \le
k$. Then in a normal form, firstly, all compositions in direction 1
are made, with all the parentheses moved to the right, as in $a_1
\circ_1 (a_2 \circ_1 (\dots \circ_1 (a_{q-1} \circ_1 a_q)\dots))$,
then all compositions in direction 2 are made, again with all the
parentheses moved to the right, etc., the last compositions are made
in direction $d$, with all parentheses on the right. This is a normal
form, because in our directed graph it represents a terminal vertex:
no \emph{forward moves}, i.e., moves along directed edges of the
graph, $\alpha_i$ or $\beta_{ji}$ with $i < j$, can be applied to this
composition.

Next, observe that our functor $(k\Mor)^p_c \to k\Mor$ is a
composition two such functors, say $a \circ_i b$, each with a strictly
less than $p$ number of terms. One has to consider a number of cases,
depending on what the natural transformations $\gamma$ and $\delta$
are. There are the following possibilities for $\gamma$:
\begin{enumerate}
\item $\gamma$ acts inside $a$;
\item $\gamma$ acts inside $b$;
\item \label{case3} $a= a_1 \circ_i a_2$, and $\gamma = \alpha_i: (a_1
  \circ_i a_2) \circ_i b \to a_1 \circ_i (a_2 \circ_i b)$ is an
  associator;
\item \label{case4} $a= a_1 \circ_j a_2$, $b= b_1 \circ_j b_2$, and
  $\gamma = \beta_{ji}: (a_1 \circ_j a_2) \circ_i (b_1 \circ_j b_2)
  \to (a_1 \circ_i b_1) \circ_j (a_2 \circ_i b_2)$ with $i < j$ is an
  interchanger.
\end{enumerate}
The same possibilities are there for $\delta$. If they act both within
$a$ or both within $b$, we are done by the induction assumption. If
$\gamma$ acts within one term and $\delta$ within the other, then they
commute because of the functoriality of $\circ_i$, and we get a
commutative square. If both $\gamma$ and $\delta$ fall under Cases
\eqref{case3} and \eqref{case4}, then the only possibility for this to
happen is when $\gamma = \delta$, and the enhanced diamond condition
holds trivially. Thus, what remains to be done is to check the
enhanced diamond condition when $\gamma$ is an associator or
interchanger and $\delta$ acts within either $a$ or $b$, or vice
versa.

Suppose $\gamma = \alpha_i$ is an associator, as in
Case~\eqref{case3}. If $\delta$ works entirely within $a_1$, $a_2$, or
$b$, then $\gamma$ and $\delta$ commute by the naturality of
$\alpha_i$. The only remaining cases here are (1) $a_1 = a_{11}
\circ_i a_{12}$ and $\delta$ is the associator $((a_{11} \circ_i
a_{12}) \circ_i a_2) \circ_i b \to (a_{11} \circ_i (a_{12} \circ_i
a_2)) \circ_i b$, and (2) when $a_1 = a_{11} \circ_j a_{12}$ and $a_2
= a_{21} \circ_j a_{22}$ for some $j > i$ and $\delta$ is the
interchanger $\beta_{ji}: ((a_{11} \circ_j a_{12}) \circ_i (a_{21}
\circ_j a_{22})) \to (a_{11} \circ_i a_{21}) \circ_j (a_{12} \circ_i
a_{22})$ appended with $\circ_i b$. In the first case, the two
associators $\gamma$ and $\delta$ are the first two edges in the
coherence pentagon, and we can continue them as paths around the
pentagon to the opposite vertex. The resulting compositions of the
natural transformations will be equal, because of the pentagon
axiom. The above argument repeats a standard argument in the proof of
Mac Lane's coherence theorem for monoidal categories, see
\cite{unapologetic} or \cite{maclane}.

In the second case, we have $\gamma = \alpha_i: (a_1 \circ_i a_2)
\circ_i b \to a_1 \circ_i (a_2 \circ_i b)$ and $\delta = \beta_{ji}
\circ_i \id: ((a_{11} \circ_j a_{12}) \circ_i (a_{21} \circ_j a_{22}))
\circ_i b \to ((a_{11} \circ_i a_{21}) \circ_j (a_{12} \circ_i
a_{22})) \circ_i b$, where $a_1 = a_{11} \circ_j a_{12}$ and $a_2 =
a_{21} \circ_j a_{22}$, and need to check the enhanced diamond
condition for these two edges. If $b$ were factored as $b_1 \circ_j
b_2$, with $b_1$ and $b_2$ fitting in direction $i$ with $a_{21}$ and
$a_{22}$, respectively, we would then proceed as in the previous case
and just use the first hexagon axiom. If the composition diagram $c$
were not regular, $b$ would not factor like that in general, even
after applying a few associators and interchangers. Given that our
composition diagram is regular, we could use coherence
transformations, existing by the induction assumption, to rearrange
how $b$ is composed and indeed factor it as $b_1 \circ_i b_2$ to fit
with $a_{21}$ and $a_{22}$, as above, but the problem is that it would
not help to check the diamond condition, as this rearrangement would
not necessarily be achieved by forward moves. To deal with this
problem, we will use the induction assumption to put each factor
$a_{11}, a_{12}, a_{21}, a_{22}, b$ in a normal form by forward
moves. In particular, since we assumed that compositions in our
composition diagram $c$ are performed in directions $1, 2, \dots, d$,
a normal form of the above five terms will always be a composition in
direction $d$: $ - \circ_d -$, and we will also have $i < j \le d$. We
will have to consider two cases: $j = d$ and $j < d$. If $j=d$, we
will only need to put $a_{11}$, $a_{21}$, and $b$ in a normal form:
\begin{eqnarray*}
  a_{11} & = & a_{111} \circ_j a_{112},\\
  a_{21} & = &a_{211} \circ_j a_{212},\\
  b &= & b_1 \circ_j b_2.
\end{eqnarray*}
Because of the regularity of our initial diagram $c$, morphisms in it
will fit each other as follows:
\[
\xymatrix{
  \bullet \ar[r] \ar[d] \ar @{} [dr] |{a_{111}} & \bullet \ar[r] \ar[d] \ar @{} [dr] |{a_{211}} & \bullet \ar[r] \ar[d] \ar @{} [dr] |{b_1}  & \bullet \ar[d]\\
  \bullet \ar[r] \ar[d] \ar @{} [dr]  |{a_{112}} & \bullet \ar[r] \ar[d] \ar @{} [dr]  |{a_{212}} & \bullet \ar[r] \ar[d] \ar @{} [ddr]  |{b_2}  & \bullet \ar[d]\\
  \bullet \ar[r] \ar[d] \ar @{} [dr]  |{a_{12}} & \bullet \ar[r] \ar[d] \ar @{} [dr]  |{a_{22}} & \bullet \ar[d]  & \bullet \ar[d]\\
  \bullet \ar[r] &\bullet \ar[r] &\bullet \ar[r] & \bullet}
\]
Then we will apply to $(((a_{111} \circ_j a_{112}) \circ_j a_{12})
\circ_i ((a_{211} \circ_j a_{212}) \circ_j a_{22})) \circ_i (b_1
\circ_j b_2)$, as per this diagram, several successive forward moves
with the idea of splitting off the top row $a_{111} \circ_i (a_{211}
\circ_i b_1)$ and using induction on the remaining smaller
diagram. This process may be described by the following scheme, see
Figure~\ref{typical}: we will extend the edges $\gamma$ and $\delta$,
which start at vertex 1, to directed paths ending at vertices 3 and 4,
respectively. We will construct directed paths from vertex 1 to these
vertices via a new, common vertex 2 in such a way that these new paths
are equivalent to the respective previous ones via a sequence of
coherence diagrams. The paths from 2 to 3 and 2 to 4 will actually be
directed edges of the type $\id \circ_d \gamma'$ and $\id \circ_d
\delta'$ (remember that $d=j$), and those may be extended to two
equivalent paths to a common vertex 5 by the induction assumption,
applied to $\gamma'$ and $\delta'$.

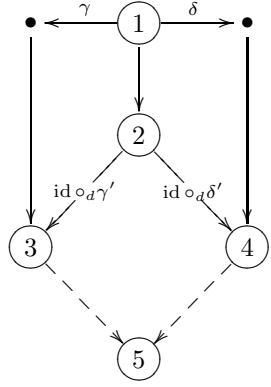
\begin{figure}
\[
\xymatrix{ \bullet \ar[dd] & *++[o][F-]{1} \ar[l]_{\gamma}
  \ar[r]^{\delta} \ar[d]
  & \bullet  \ar[dd]\\
  & *++[o][F-]{2} \ar[dl]|{\id \circ_d \gamma'} \ar[dr]|{\id \circ_d \delta'}\\
  *++[o][F-]{3} \ar@{-->}[dr] & & *++[o][F-]{4} \ar@{-->}[dl]\\
  & *++[o][F-]{5} }
\]
\caption{Inductive Argument Scheme}
\label{typical}
\end{figure}

In our case, when $\gamma = \alpha_i$ and $\delta = \beta_{ji} \circ_i
\id$, this scheme is realized in the diagram in
Figure~\ref{alphabeta}.

\begin{sidewaysfigure}
\vspace{5in}
\hspace{0.5in}
\includegraphics[height=8in,angle=90]{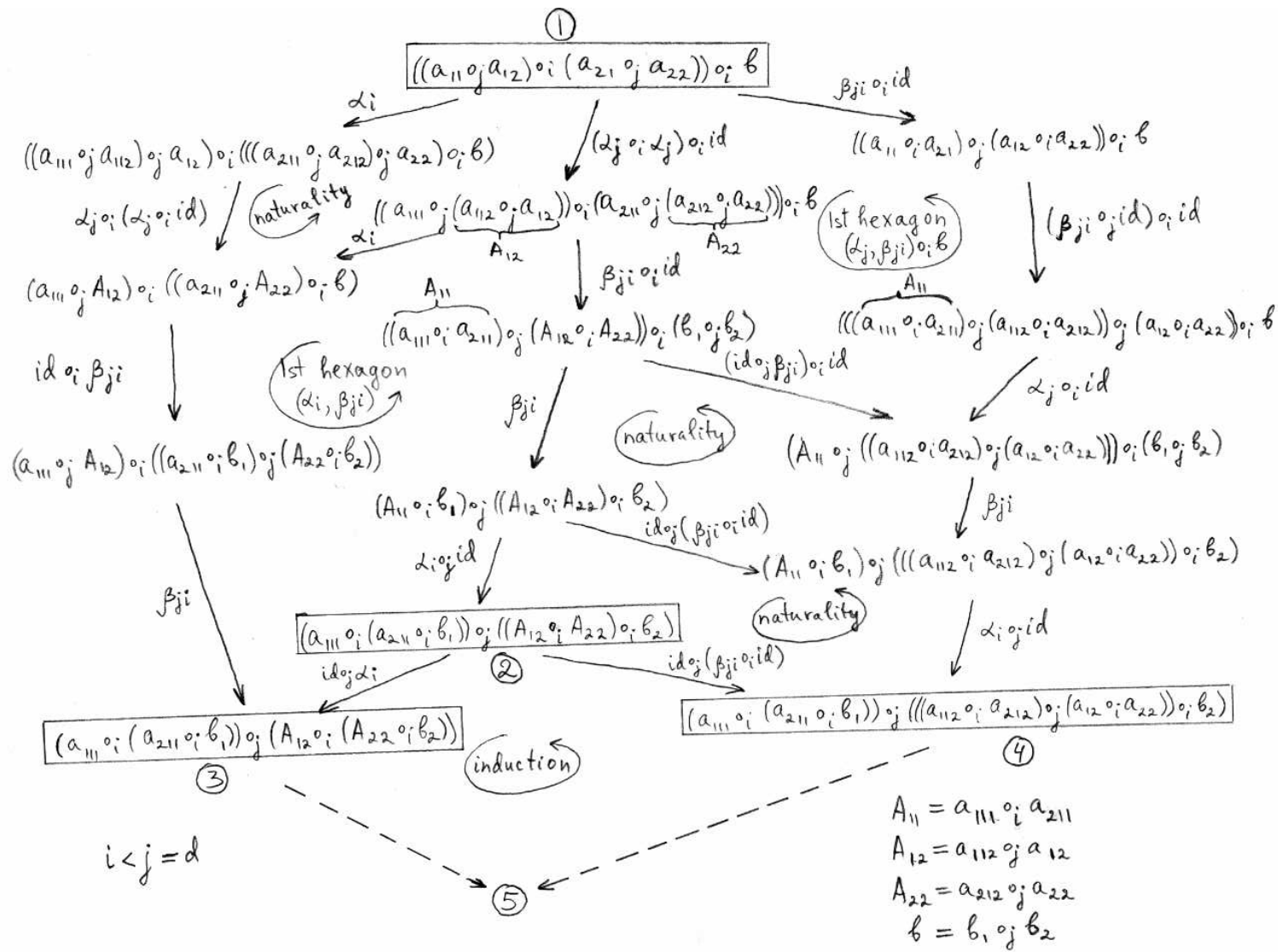}
\caption{Verification of the Diamond Condition: $\alpha$ and $\beta$}
\label{alphabeta}
\end{sidewaysfigure}

This completes considering the case when $\gamma = \alpha_i$ and
$\delta = \beta_{ji}$ with $j = d$.

In principle, the same case with $j<d$ and the remaining
Case~\eqref{case4}, when $\gamma = \beta_{ji}$ and $\delta$ acts
entirely within $a$ or $b$, or vice versa, are done similarly,
following the general idea of Figure~\ref{typical}, except that there
are extra dimensions involved, which is something new as compared to
classical coherence theorems for monoidal categories. We will present
the case which we found most complicated and leave the other cases as
an exercise for the reader.

In this case, we will use the diagram in Figure~\ref{diamond}, in
which $\delta = \beta_{ji}: (a_1 \circ_j a_2) \circ_i (b_1 \circ_j
b_2) \to (a_1 \circ_i b_1) \circ_j (a_2 \circ_i b_2)$ with $i < j$ and
$\gamma$ acts within $a_1 \circ_j a_2$ as $\gamma = \beta_{kj} \circ_i
\id: ((a_{11} \circ_k a_{12}) \circ_j (a_{21} \circ_k a_{22})) \circ_i
(b_1 \circ_j b_2) \to ((a_{11} \circ_j a_{21}) \circ_k (a_{12} \circ_j
a_{22})) \circ_i (b_1 \circ_j b_2)$ with $j < k < d$. Again, the
general scheme may be described by Figure~\ref{betas}.

\begin{sidewaysfigure}
\vspace{5in}
\hspace{-1in}
\includegraphics[width=9in]{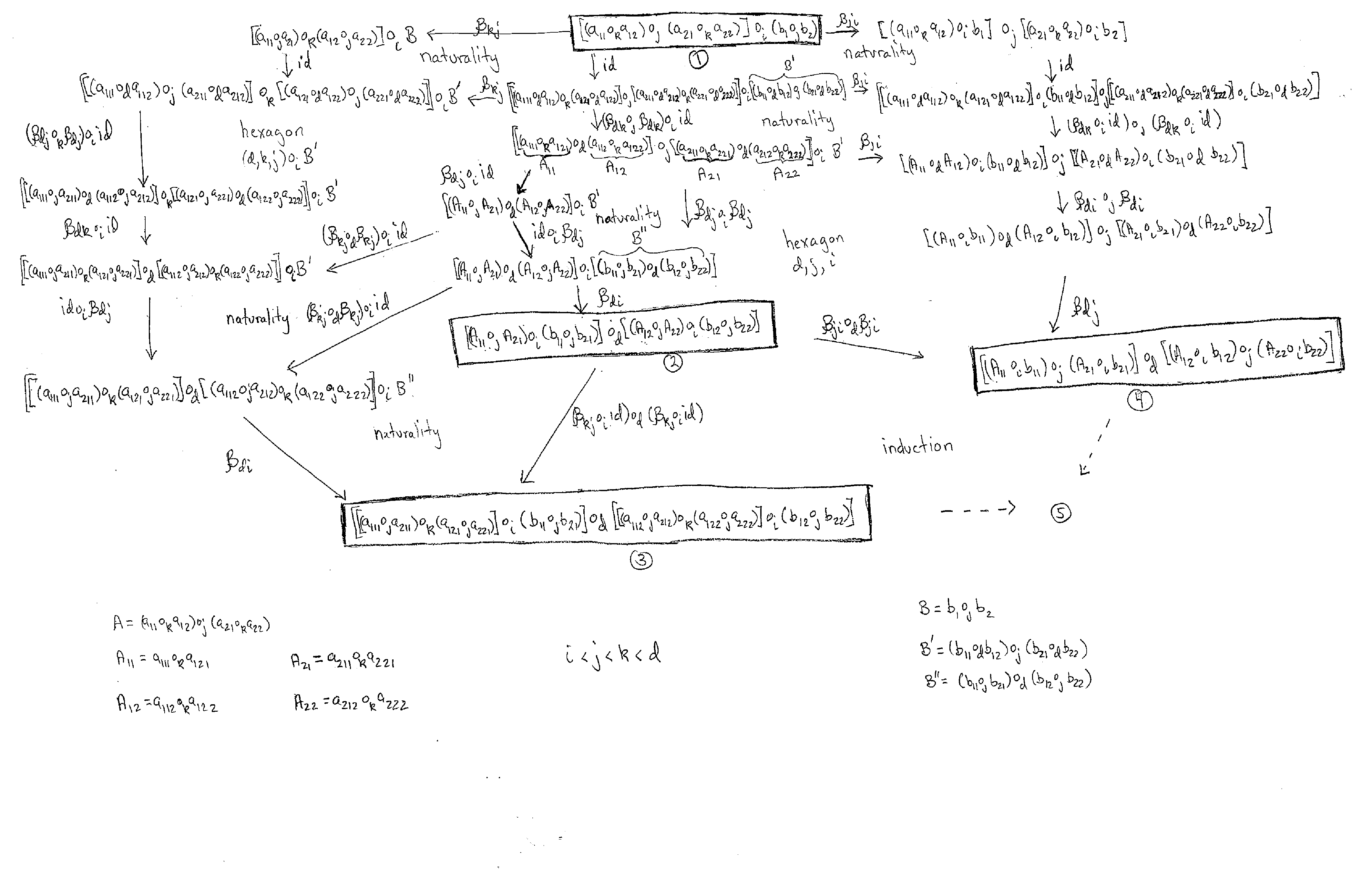}
\caption{Verification of the Diamond Condition: Two $\beta$s}
\label{diamond}
\end{sidewaysfigure}

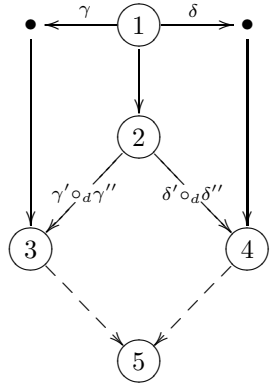
\begin{figure}
\[
\xymatrix{
\bullet \ar[dd]
& *++[o][F-]{1} \ar[l]_{\gamma} \ar[r]^{\delta} \ar[d]
& \bullet  \ar[dd]\\
& *++[o][F-]{2} \ar[dl]|{\gamma' \circ_d \gamma''} \ar[dr]|{\delta' \circ_d \delta''}\\
*++[o][F-]{3} \ar@{-->}[dr] & & *++[o][F-]{4} \ar@{-->}[dl]\\
& *++[o][F-]{5}
}
\]
\caption{General Scheme of Diagram in Figure~\ref{diamond}}
\label{betas}
\end{figure}

\end{proof}

\bibliographystyle{amsalpha}
\bibliography{ncat}
\end{document}